\newcommand{\stkout}[1]{\ifmmode  \text{\sout{\ensuremath{#1}}}\else\sout{#1}\fi}
\newtheorem{theorem}{Theorem}[section]
\newtheorem{remark}[theorem]{Remark}
\newtheorem{assumption}[theorem]{Assumption}
\newtheorem{lemma}[theorem]{Lemma}
\newtheorem{proposition}[theorem]{Proposition}
\newtheorem{definition}[theorem]{Definition}
\def \R{\mathbb{R}}
\definecolor{red}{rgb}{1.0,0.0,0.0}
\def\blu#1{{\textcolor{blu}{#1}}}
\definecolor{blu}{rgb}{0.0,0.0,1.0}
\def\blu#1{{\textcolor{blu}{#1}}}
\definecolor{gre}{rgb}{0.03,0.50,0.03}
\definecolor{darkviolet}{rgb}{0.58, 0.0, 0.83}
\definecolor{blu}{rgb}{0, 0, 0} 
\def \eps{\varepsilon}
\title[Linear-Quadratic Mean Field Games in Hilbert spaces]{Linear-Quadratic Mean Field Games in Hilbert spaces}
\author[Federico]{Salvatore Federico}
\author[Ghilli]{Daria Ghilli}
\author[Gozzi]{Fausto Gozzi}
\address{S.~Federico: Dipartimento di Matematica, University of Bologna, Porta San Donato, Bologna, Italy}
\email{\href{mailto:s.federico@unibo.it}{s.federico@unibo.it}}
\address{D.~Ghilli: Dipartimento di Scienze Economiche e Aziendali, University of Pavia, Via San Felice al Monastero 5, Pavia, Italy}
\email{\href{mailto: daria.ghilli@unipv.it}{daria.ghilli@unipv.it}}
\address{F.~Gozzi: Dipartimento di Economia e Finanza, LUISS University of Rome, Vialee Romania 32, Roma, Italy.}
\email{\href{mailto: fgozzi@luiss.it}{f.gozzi@luiss.it}}
\date{\today}
\numberwithin{equation}{section}
\begin{document}

\begin{abstract}
{This paper represents  the first attempt to develop a theory for linear-quadratic mean field games in possibly infinite dimensional Hilbert spaces. As a starting point, we study the case, considered in most finite dimensional contributions on the topic, where the dependence on the distribution enters just in the objective functional through the mean.
This feature allows, similarly to the finite-dimensional case, to reduce the usual mean field game system to a Riccati equation and a forward-backward coupled system of abstract evolution equations. Such a system is completely new in infinite dimension and no results have been proved on it so far. We show the existence and uniqueness of solutions for such system, applying a delicate approximation procedure. We apply the results to a production output planning problem with delay in the control variable.}
\end{abstract}

\maketitle

\smallskip

{\textbf{Keywords}}:  Mean field games, Infinite dimensional linear-quadratic control, delay equations.

\smallskip

{\textbf{MSC2010 subject classification}}:  49L20, 70H20, 93E20,  47D03

 \tableofcontents

\section{Introduction}\label{sec:intro}

The theory of Mean Field Games (MFGs, hereafter, for short)
is a powerful tool to study situations where many forward-looking
players interact through the distributions of their state/control variables.
The starting foundation of this theory is usually dated in 2006, with the seminal papers by Lasry-Lions on the one side and by Huang-Caines-Malhamé \cite{LL1, LL2, LL3, L, HCM} on the other side. Since then, a huge amount of work has been done in this area, both from the theoretical and the applied viewpoint; far to be exhaustive, we quote, as benchmark references for our scopes, the nowadays classical contributions \cite{BeFrYa, CarmonaDelarueBook, CarmonaDelarue2, CDLLbook}.

However, an interesting topic in this area is still largely missing; that is,
the case when the state space of the system, and possibly also of
the control space, is not finite-dimensional. In  control theory, this kind of problem arises, e.g., when the dynamics of the agent
depends on other variables beyond time, such as age or space, or when such dynamics is path-dependent.\footnote{In these cases, a typical approach consists in lifting the dynamics into an infinite dimensional Ordinary Differential Equation (ODE for short) containing unbounded terms.
Roughly speaking, the infinite dimensionality encloses the dependence on other variables (space, age) and/or the dependence on the past paths.}

This paper constitutes a first attempt to fill this gap.
We do it dealing with infinite dimensional MFGs in the Linear-Quadratic (LQ) case; \blu{in another paper (see \cite{FeGoSw}),  the more general nonlinear cases under some assumptions (global Lipschitz regularity of the resulting Hamiltonian) that are not satisfied in the LQ case.}

More precisely, we focus (as most of the literature on LQ MFGs in finite dimension, see e.g. \cite{BeFrYa,BeSuYa}) on MFGs where the dynamics of the representative player is linear and independent on the distribution;  the coupling  enters only in the cost functional (which is purely quadratic) through the mean of the distribution of the players. This kind of structure is still suitable to investigate a range of problems arising in several applications, see Section 5 below.

\subsection{Some literature}

First of all we recall that, beyond the basic references on MFGs in finite dimension recalled at the beginning of the introduction,
various papers have studied the LQ MFG case
with dependence on the distribution entering just through the mean in the objective functional.
We recall, in particular one of the papers establishing the first steps of the theory \cite{CH}, where  a set of decentralized control laws for the individuals is obtained, and the $\eps$-Nash equilibrium property is proved for such set (see also \cite{LiZh}); moreover, there one finds also some examples to specific situations, such as the production output planning problem, a suitable extension of which is the object of our Section 5.
Other references, whose finite dimensional techniques have been one of the departure points of our work, are the book \cite{BeFrYa} (Chapter 6, Sections $6.1$-$6.3$) and the paper \cite{BeSuYa}. We also refer to the books \cite{CarmonaDelarueBook, CarmonaDelarue2} (and the references therein) which provide a complete study of the probabilistic approaches to MFG and, in their Chapters 2 and 7, investigate some classes of LQ problems.
We also mention \cite{CaFoSu} for an application to systemic risk. Finally we mention \cite{Bardi12, BaPri} the first one giving explicit solutions to a class of LQ problems in one dimension, where the objective function minimized by the players is computed as an ergodic average over an infinite horizon and the second in dimension greater than one giving necessary
and sufficient conditions for the existence and uniqueness of quadratic-Gaussian solutions
in terms of the solvability of suitable algebraic Riccati and Sylvester equations.
\textcolor{blu}{We also mention that recently a new stream of research has been focusing on the so-called \textit{submodular Mean Field Games}, see e.g. \cite{DiFe, DiFeFiNe, DiFeFiNe2}. The submodularity condition allows  to prove the existence of MFG solutions without using a weak formulation or the notion of relaxed controls and using, instead, probabilistic arguments and a lattice-theoretical approach.}


We now pass to references for the infinite dimensional setting used in this paper.
A general treatment of infinite dimensional ODEs, viewed as abstract evolution equations, is well established since many decades, especially in the deterministic case; here we just quote two standard references (cf. \cite{DPB, DaPraZa}) one for the deterministic and one for the stochastic case, which is the one of our interest.

Relying on that, starting from the work of Barbu and Da Prato \cite{BDPbook} a large amount of work has been done in the last 40 years on stochastic optimal control and Hamilton-Jacobi-Bellman (HJB) equations in Hilbert spaces and is nowadays quite well-established  too: we may mention \cite{FGS} for an extended overview of this theory, including results and references.

On the other hand, also the theory of Fokker-Planck-Kolmogorov (FPK) equations in infinite dimensional spaces has attracted the attention in the last decade with some valuable contributions: we may mention \cite{ BoDaPraRoSh, BoDaPraRoSh2,  BoDaPraRo3, BoDaPraRo5, DaPra, DaPraFlaRo}. Since MFGs analytically consist in coupled forward-backward systems of HJB and FPK equations, it is therefore natural to try to  merge the aforementioned (separate) theories to fill the gap of a missing theory of MFGs in infinite dimensional spaces.

The only papers about MFG in infinite dimensional spaces we are aware of is represented by
\cite{FZ}, where a specific example of an LQ case is treated in a setting which is different
than the one of the present paper, and \blu{ the more recent paper \cite{liu2024hilbert}, where MFGs of LQ with a structure more general than ours is address, but only for short time horizon.}

\subsection{A Sketch of our setting}
\textcolor{blu}{
We begin by providing an overview of the general framework of Mean Field Games (MFG) before introducing our specific setting.  
The study of equilibria in $N$-player games when $N$ is large is a topic of significant interest in many applications, but it poses considerable challenges. In particular, analyzing closed-loop Nash equilibria involves solving a complex system of $N$ coupled Hamilton-Jacobi-Bellman (HJB) equations.  
Mean Field Games (MFG) theory formally arises by taking the limit of an $N$-player game as $N \to \infty$, with the hope that the resulting limit system is more tractable and that its properties provide insights into the equilibria of the $N$-player game for sufficiently large $N$.  
This  research program can be  carried out under three main assumptions (see \cite{CarmonaDelarueBook}):  
(i) the players are symmetric;  
(ii) the interaction is of \textit{mean field type}, meaning that only the overall distribution influences the agents' decisions, while the specific actions of individual players have no direct effect;  
(iii) the players are ``small,'' in the sense that the decisions of a single player do not impact the mean-field system.  
Under these assumptions, it is possible to study the limit problem (at least in some meaningful cases) and establish connections between the limit system and the original $N$-player game.  
More precisely, it can be shown that any solution of a Mean Field Game corresponds to an $\varepsilon$-Nash equilibrium of the associated $N$-player game (see \cite{CarmonaDelarueBook}, Part II, Chapter 6, Section $6.1$). Furthermore, in certain cases (though requiring much greater technical effort), one can prove the convergence of Nash equilibria from the $N$-player game to the solution of the corresponding Mean Field Game (see again  \cite{CarmonaDelarue2} (Part II, Chapter 6, Sections $6.2$ and $6.3$) and \cite{CDLLbook} (Chapter 8, Section $8.2$).
}


We now provide a sketch of our setting and of the
resulting MFG.
Let $H$ be a separable \textcolor{blu}{Hilbert space} and $\mathcal{P}(H)$ the space of probability measures in $H$.
Let us consider the following stochastic optimal control problem with finite horizon $T>0$. The controlled dynamics of a representative agent starting at time $t\in[0,T)$ and
dealing in a large population of agents, evolves in a separable Hilbert space  $H$ and  according to a linear controlled SDE of the form
\begin{equation}\label{eqn:stateeq}
dX(s)=[AX(s)+B\alpha(s)]ds+\sigma dW(s), \ \ \ X(t)=x,
\end{equation}
where
\begin{enumerate}[(i)]
\item $W$ is a cylindrical Wiener process defined on a filtered probability space and valued in another separable Hilbert space $K$, and $\sigma\in\mathcal{L}(K;H)$ is a suitable diffusion coefficient \textcolor{blu}{and $\mathcal{L}(K;H)$ is the Banach space of bounded linear operators from $K$ to $H$ endowed with the usual sup-norm (see Section \ref{sec:notset} for further details)};
\item $A,B$  are suitable linear operators and  $\alpha(\cdot)$ is the control process taking values in some control space $U$ and lying in a set of admissible processes $\mathcal{A}$.
\end{enumerate}

The aim of this representative agent
 is to minimize a cost functional also depending on the overall distribution of the states of the other agents $m: [0,T]\to \mathcal{P}(H)$, such as
$$
J(t,x,\alpha)=
\mathbb{E}\left[\int_t^T f(X(s), m(s),\alpha(s) )ds+h(X(T),m(T))\right],
$$
where $f:H\times \mathcal{P}(H)\times U\to\R$ and  $\textcolor{blu}{h}:H\times \mathcal{P}(H)\to\R$ are given measurable functions.
The value function of the above control problem is
$$
V(t,x)=\inf_{\alpha(\cdot)\in\mathcal{A}}J(t,x;\alpha).
$$
The HJB equation associated to $V$ is the following infinite dimensional parabolic PDE:
\begin{equation}\label{eq:HJBintro}
{\partial}_{t}v(t,x)+\frac{1}{2} \mbox{Tr}\big[ \sigma\sigma^{*}D^2v(t,x)\big]+\langle Ax,Dv(t,x)\rangle_{H}+\mathcal{H}_{min}(x,m(t),Dv(t,x)))=0,
\end{equation}
with terminal condition $v(T,x)=h(x,m(T))$, where  $\langle\cdot,\cdot\rangle_{H}$ denotes the inner product in $H$;  $D,D^{2}$ denote \textcolor{blu}{the first and second order Fréchet derivatives} with respect to the $x$ variable;
and where
\begin{equation}\label{HJB}
\mathcal{H}_{min}(x,m,p):=
\textcolor{blu}{\inf}_{\alpha\in U} \big\{f(x,m,\alpha) + \langle B\alpha,p\rangle_{H}\big\}.
\end{equation}
Call $G(x,m,p)$ the argmax of the above formula  and assume that it is a unique point.
Heuristically speaking, the HJB equation \eqref{eq:HJBintro} allows, given the path of $m(\cdot)$, to find the optimal feedback strategy $$\alpha^*(t)=G(X(t),m(t),Dv(t,X(t)))$$ of the representative agent in terms of $Dv$.
As well known (see e.g. \cite{CarmonaDelarueBook} and \cite{CDLLbook}), denoting by $X^{*}$ the optimally controlled state --- depending on the given $m(\cdot)$ --- and imposing the consistency condition $\mathcal{L}(X^{*}(s))=m(s)$ for every $s$, \textcolor{blu}{where $\mathcal{L}(X)$ denotes the law of the random variable $X$}, one sets a problem that can be interpreted as the limit, as the number $N$ of agents tends to $\infty$, of the Nash equilibrium of the symmetric non-cooperative $N$-players game in which the strategic interactions among agents only depend on the evolution of the probability distribution $m(\cdot)$ of state variables of the agents. The above consistency condition rewrites as a FPK equation for the distribution $m(\cdot)$, which is formally written as:
\begin{equation}\label{FP1}
\partial_{t}m\textcolor{blu}{(t,dx)}- \frac{1}{2}\mbox{Tr} [\sigma \sigma^{*}D^2m\textcolor{blu}{(t,dx)}]+\mbox{div}\left(D_p\mathcal{H}_{min}(x,m\textcolor{blu}{(t,dx)},Dv\textcolor{blu}{(t,x)})
\right)=0,
\end{equation}
with initial condition $m(0,dx)=m_0(dx)$, where $m_0$ is the initial distribution of  the agents' population \textcolor{blu}{and $D_p$ denotes the gradient of $\mathcal{H}$ with respect to  the last variable}.

Since we assume that $f$ and $\textcolor{blue}{h}$ are purely quadratic and depends just on the mean of $m(\cdot)$ (see \eqref{f}-\eqref{h}) the MFG system \eqref{eq:HJBintro}-\eqref{FP1} can be reduced to a system of abstract ODEs. Indeed,  arguing as in the finite dimensional case (see \cite[Ch.\,6]{BeFrYa} and \cite{BeSuYa}),
one defines the variable
$
z(t)=\int_H\xi m(t, d\xi),
$
and one guesses a quadratic structure
$v(t,x)=\frac{1}{2}\langle P(t)x,x\rangle+\langle r(t),x\rangle+s(t)$ for the solution of the HJB equation, with unknown $P(\cdot),r(\cdot),s(\cdot)$.
With this guess, the HJB-FPK system \eqref{f}-\eqref{h}
is rephrased in the following backward Riccati equation for $P(\cdot)$ (cf. \eqref{Riccati:P}):
\begin{equation}\label{Riccati:Pintro}
\begin{cases}
\displaystyle{P'(t)+P(t)A+A^*P(t)-P(t)BR^{-1}B^*P(t)+Q+\overline Q=0,}\\\\
P(T)=Q_T+\overline Q_T,
\end{cases}
\end{equation}
and in the coupled system in $(r(\cdot), z(\cdot))$ (cf. \eqref{eq:z} and \eqref{eq:r}):
\begin{equation}\label{eq:zintro}
\begin{cases}
\displaystyle{z'(t)=(A-BR^{-1}B^*P(t))z(t)-BR^{-1}B^*r(t),}\\\\
z(0)=z_0 :=\displaystyle{\int_{H} x\,\,m_0(dx)} \in H.
\end{cases}
\end{equation}
\begin{equation}\label{eq:rintro}
\begin{cases}
\displaystyle{r'(t)+ (A^*-P(t)BR^{-1}B^*)r(t)-\overline Q Sz(t)=0},\\\\
r(T)=-\overline Q_TS_Tz(T).
\end{cases}
\end{equation}
While the Riccati equation
\eqref{Riccati:Pintro} falls into the already established literature on infinite dimensional LQ control (see e.g. \cite{DPB})\textcolor{blu}{,}
the latter coupled forward-backward system
\eqref{eq:zintro} and \eqref{eq:rintro}
is completely new and need to be studied from scratch.

\subsection{Our results and methods}

Existence and uniqueness of solutions for the Riccati equation \eqref{Riccati:Pintro} involving $P(\cdot)$ is, as said above, well known in the literature (see e.g. \cite{CP74}).
Proposition \ref{lem:boundriccati} recalls such basic result, together with some estimates which turn out to be useful in solving the forward-backward system for $(r(\cdot),z(\cdot))$. To establish the existence for the
forward-backward system
\eqref{eq:zintro}-\eqref{eq:rintro}
the idea is to decouple such system looking, similarly to the finite dimensional case, for solutions in the form
\begin{equation}\label{eqn:zetaetaintro}
r(t)=z(t)\eta(t)
\end{equation}
for some $\eta(\cdot)  :  [0,T] \to \Sigma(H)$, where $\Sigma(H)$ denotes the set of linear bounded self-adjoint operators from $H$ to $H$.

We have to mention that the way of performing this decoupling here is much more delicate with respect to the finite dimensional case.
The main difficulty, as typically happens in dealing with infinite dimensional dynamics, is represented by the fact that the operator $A$ is unbounded, hence we cannot rely on the notion of classical (i.e., $C^{1}$) solutions.
The way to overcome such difficulty, in the infinite dimensional literature, is to employ
weaker concepts of solutions\footnote{Here we use
the concept of \emph{mild solution}, based on a generalization of the finite dimensional \emph{variation of constants formula}.}
and to develop suitable approximations procedures. While this procedure has been already worked out to study Riccati equation like \eqref{Riccati:Pintro}, this is not the case for
the case of our forward-backward system
\eqref{eq:zintro}-\eqref{eq:rintro}
which presents different structure and difficulties.
We outline the path we follow.
First we look at the system
\eqref{eq:zintro}-\eqref{eq:rintro}
where we substitute $A$ with its Yosida's approximants $(A_{n})_{n\in \mathbb{N}}$.
Then use the decoupling idea of \eqref{eqn:zetaetaintro} to get a Riccati equation for an approximating object $\eta_{n}$ and, in turn, decoupled ODEs for approximating objects $z_{n}$ and $r_{n}$. Then, dealing with the weak topology of $H$, we take the limit as $n\to \infty$, relying on Ascoli-Arzel\'a's Theorem in infinite dimension, to get the (mild) solution to the original forward-backward system of ODEs.
This result is the first main result of the paper proven in Theorem \ref{Th:existence}.

As underlined above, the existence for the forward-backward system in $z_n, r_n$ relies on the existence of a solution $\eta_n$ for the associated Riccati equation (see assumption $3.2$ of Theorem \ref{Th:existence}). In
the appendix we prove existence results for such Riccati equation in two cases:  the first one, more standard, for small time horizon (cf. Proposition \ref{prop:tsmall});
 the second one, under the further assumption  of nonnegativity of the operators $-\overline QS$ (cf. Proposition \ref{prop:positive}).

As for the uniqueness issue,
again we analyze two cases: the first one for small time horizon (cf. Proposition \ref{prop:small});
 the second one, under the further assumptions  i) and ii) of Theorem \ref{Th:uniqueness} on the operators $-\overline QS$ and $-\overline Q_TS_T$. This result is proven in Theorem \ref{Th:uniqueness} which is the second main result of the paper.
 Roughly speaking, these assumptions are satisfied either if the two above operators are positive definite or if they are a projection on a closed subspace of $H$.
Note that the conditions on these operators are in the spirit of the classical monotonicity conditions of Lasry-Lions ensuring uniqueness (see \cite{LL3}, Theorem $2.4$), corresponding here to the case when
 $-\overline QS$ and $-\overline Q_TS_T$ are positive definite. In our context, this allows to treat problems where the agent is willing to place themself in the opposite position with respect to the mean of the overall system (see \eqref{f} and \eqref{h})\footnote{In \cite{BeSuYa},  where the finite dimensional case is studied, the authors prove uniqueness by requiring that the time horizon is sufficiently small with respect to the certain operators involving the data of the  problem (see \cite[Prop. 3.5, \,3.6]{BeSuYa}).}.

\subsection{On the applications}

In Section \ref{sec:appl}, we propose an application to a production output planning example with delay.  In this example, the firms supply the same product to the market and the production adjustments are affected also by the the past history of investments (so called \emph{time-to-build}). The aim of the firm is to find a production level which is close to the price. We are able to apply our results to this example.

We believe that our techniques can be adapted to  more general cases. In particular we mention two of them.
First the case in which there is an additional linear dependence on the distribution and on the state in the objective functional (that is, in \eqref{f} and \eqref{h}).
Second, the case like the one of
\cite{FZ} where the operators $-\bar QS, -\bar Q_TS_T$ have the opposite sign with respect to ours.\footnote{Roughly speaking, this means that in our case the agents  pay an additional cost if they stay near to the mean (see assumption ii) and iii) of Theorem \ref{Th:uniqueness}) whereas in \cite{FZ} the agent pay an extra cost if they stay far from the mean.} Indeed in both cases there seems to be room for improvement of our results that will be the subject of forthcoming research.

\subsection{Plan of the paper}
Section 2 is devoted to write write down  setup of our problem together with a formal derivation
of the system \eqref{Riccati:Pintro}-\eqref{eq:rintro}-\eqref{eq:zintro}
from the HJB-FPK system \eqref{eq:HJBintro}-\eqref{FP1},
and with the rigorous definition of solution of our LQ MFG (Definitions \ref{df:solsyst} and \ref{df:solMFG}).

Section 3 is completely to the existence result, Theorem \ref{Th:existence}. Since the existence result strictly depends on the somehow implicit Assumption 3.2, we present, in Appendix A, two results that provide reasonably checkable conditions that guarantee that Assumption 3.2 is satisfied.
Section 4 is  devoted to the uniqueness results, Theorem \ref{Th:uniqueness}.
Section 5 is devoted to illthe ustrate an example.

\section{Notations and formal setting}\label{sec:notset}
Let $(H,\langle\cdot,\cdot\rangle_{H}),  \, (K,\langle\cdot,\cdot\rangle_{K}),\,  (U,\langle\cdot,\cdot\rangle_{U})$
be separable real Hilbert spaces and  denote by $|\cdot|_{H}$, $|\cdot|_{K}$, $|\cdot|_{U}$, respectively, the norms induced by corresponding inner products; unless differently specified, on $H,K,U$ we consider the strong topologies, i.e. the ones induced by their norm.
We denote by $\mathcal{L}(H)$ the space of linear bounded operators $L:H\to H$, by
$\Sigma(H)$ the subspace of $\mathcal{L}(H)$ of self-adjoint operators, by
$
\Sigma^+(H)
$
the space of self-adjoint nonnegative operators and by $\Sigma^{++}(H)$ the space of self-adjoint positive operators.
The space $\mathcal{L}(H)$ is considered endowed with usual sup-norm $$\|L\|_{\mathcal{L}(H)}:=\sup_{|x|_H=1}{|Lx|_{H}},$$ which makes it a Banach space. With respect to this norm,   $\Sigma(H)$ and $\Sigma^{+}(H)$ are closed. Similar notations are used with respect to the spaces $K,U$.

Similarly, we consider the Banach space $\mathcal{L}(U;H)$ of bounded linear operators from $U$ to $H$ endowed with the usual sup-norm
$$\|L\|_{\mathcal{L}(U;H)}:=\sup_{|x|_U=1}{|Lx|_{H}}.$$
By  $\mathcal{P}(H)$ we denote the space of regular probability measures on $H$, \textcolor{blu}{where (see Def.\,1.1 in  \cite{Parthasarathy67}) a regular probability measure $\mu$ on $H$ is a probability measure defined on $\mathcal{B}(H)$ such that, for all Borel sets $B$, one has $\sup_F \mu(F) = \mu(B) = \inf_G\mu(G)$, where $F$ ranges over all closed sets contained in $B$, and $G$ ranges over all open sets containing $B$. The space $\mathcal{P}(H)$  is endowed with the Borel $\sigma$-algebra $\mathcal{B}(\mathcal{P}(H))$ induced by the norm of the total variation, that is, the Borel $\sigma$-algebra generated by the open sets of the topology induced by the norm of  the total variation.\footnote{\blu{We recall the notion of norm of the total variation (see Def.\,3.1.4 in  \cite{Boga}). Consider a signed measure $\mu$ on $H$. It is possible to defined two set functions $\overline W (\mu, \cdot)$ and $\underline{W}(\mu, \cdot)$ respectively called \textit{upper variation} and \textit{lower variation}, as follows
$$
\overline{W}(\mu, E)=\sup\{\mu(A)\, |\, A \in \mathcal{B}(H), A\subset E\} \quad \forall E \in \mathcal{B}(H)
$$
$$
\underline{W}(\mu, E)=\sup\{-\mu(A)\, |\, A \in \mathcal{B}(H), A\subset E\} \quad \forall E \in \mathcal{B}(H).
$$
The variation of $\mu$ is the set function
$$
|\mu|(E)=\overline{W}(\mu, E)+\underline{W}(\mu, E) \quad \forall E \in \mathcal{B}(H)
$$
and its \textit{total variation} is defined as the value of this measure on the whole space of definition, i.e.
$$
\|\mu\|=|\mu|(H).
$$}
}}
For a possibly unbounded linear operator $A\, : H\to H$, we denote by $D(A)$ its domain.

\medskip

Next, we consider the  objects defined in the following assumption, which will be standing throughout the paper.
\begin{assumption}\label{assumptions}
\begin{enumerate}
[(i)]
\item[]
\item $A: D(A)\subset H\to H$ is a closed densely defined linear operator generating  a $C_{0}-$semigroup  $(e^{tA})_{t\geq 0}$ on $H$;
 \smallskip
 \item  $ B \in \mathcal{L}(U; H)$;
 \smallskip
\item    $Q_T, Q \in \Sigma^+(H)$;
 \smallskip

 \item $\overline Q_T, \overline Q \in {\Sigma}(H)$;
 \smallskip
\item $Q+\overline Q \in \Sigma^+(H), Q_T+\overline Q_T \in \Sigma^+(H)$;

  \item $ R \in \Sigma(U)$ such that  $\langle R\alpha,\alpha\rangle_{U}\geq \varepsilon |\alpha|_U^{2}$ {for some} $\varepsilon>0$\footnote{Note that, under these assumptions,  $R$ is invertible and  $R^{-1}\in \Sigma^{+}(U)$.};
   \smallskip

  \item $S,S_{T}\in \mathcal{L}(H;H)$;
   \smallskip

  \item $\sigma\in \mathcal{L}(K;H)$.
 \end{enumerate}
\end{assumption}

\medskip

Let $T>0$ denote a time horizon. Given $x\in H$, $\mu\in \mathcal{P}(H)$, $\alpha\in U$, set
\begin{equation}\label{f}
f(x, \mu,\alpha):=\frac{1}{2}\left[\langle R \alpha, \alpha\rangle_{U}+ \langle Qx, x\rangle_H+\left\langle \overline Q\left(x-S\int_H\xi\, \mu(d\xi)\right), \  x-S\int_H\xi\, \mu(d\xi)\right\rangle_H\right],
\end{equation}
and
\begin{equation}\label{h}
h(x,\mu):=\frac{1}{2}\left[\langle Q_Tx, x\rangle_H +\left\langle \overline Q_T\left(x-S_T\int_H\xi\, \mu(d\xi)\right), \ x-S_T\int_H \xi\, \mu(d\xi)\right\rangle_H\right].
\end{equation}
We are interested in the solvability of the  forward-backward coupled system of PDEs \eqref{eq:HJBintro}-\eqref{FP1}, when $f,h$ are as above; more explicitly,
\begin{equation}\label{HJB:bis}
\begin{small}
\begin{cases}
\displaystyle{-v_{t}(t,x)=\frac{1}{2} \mbox{Tr}[ \sigma\sigma^{*}D^2\textcolor{blu}{v}(t,x)]-\frac{1}{2}\langle B R^{-1}B^*Dv(t,x), Dv(t,x)\rangle_H+\langle Dv(t,x), Ax\rangle_H}\\\\
\qquad \qquad \ \ \ +\displaystyle{
\frac{1}{2}\left[\langle Qx, x\rangle_H +\left\langle \overline Q\left (x-S\int_H \xi m(t,d\xi)\right),\  x-S\int_H\xi m(t,d\xi)\right\rangle_H\right],}\\\\\\
v(T,x)=\displaystyle{\frac{1}{2}\left[\langle Q_Tx, x\rangle_H +\left\langle \overline Q_T\left(x-S_T\int_H\xi m(T,d\xi)\right), \ x-S_T\int_H\xi m(T,d\xi)\right\rangle_H\right],}
\end{cases}
\end{small}
\end{equation}
and
\begin{equation}\label{FP:bis}
\begin{cases}
\displaystyle{m_{t}(t,dx)- \frac{1}{2}\mbox{Tr} [\sigma \sigma^{*}D^2m(t,dx)]+\mbox{div}\left(m(t,dx)\left(Ax-BR^{-1}B^*Dv\right)\right)=0,}\\\\
m(0,dx)=m_0(dx),
\end{cases}
\end{equation}
where $v:[0,T]\times H\to\R$ and $m:[0,T]\to \mathcal{P}(H)$ \textcolor{blu}{and the superscript $^*$ denotes the adjoint of a given operator}.
We now show, in an informal way, how we can deduce
the system \eqref{Riccati:Pintro}-\eqref{eq:rintro}-\eqref{eq:zintro}
from the HJB-FPK system \eqref{HJB:bis}-\eqref{FP:bis},

First, we guess solutions $v(t,x)$ to HJB \eqref{HJB:bis} in quadratic form:
\begin{equation}\label{v:quadratic}
v(t,x)=\frac{1}{2}\langle P(t)x,x\rangle_{H}+\langle r(t), x\rangle_{H}+s(t),
\end{equation}
where
$$
P:[0,T]\to \Sigma^{+}(H), \ \ \ \ r:[0,T]\to H, \ \ \ s:[0,T]\to\R.$$
Since
\begin{equation}\label{gradientsv}
Dv(t,x)=P(t)x+r(t), \quad D^2v(t,x)=P(t),
\end{equation}
the FP equation \eqref{FP:bis} becomes
\begin{equation}\label{FP}
\begin{small}
m_{t}(t,dx) =\frac{1}{2}\mbox{Tr}\Big[\sigma\sigma^{*}D^2m(t,dx)\Big]-\mbox{\large{div}}\Big(m(t,dx)\left((A-BR^{-1}B^*P(t))x-BR^{-1}B^*r(t)\right)\Big).
\end{small}
\end{equation}
Consider the following formal integration by parts formulas for a scalar function $\varphi:H\to\R$ and a $H$-valued function $w:H\to H$:
$$
\int_{H} \mbox{div}\bigg(w(x) m(t,dx)\bigg)\varphi (x)= -\int_{H} \langle w(x),D\varphi(x)\rangle_{H} \,\,m(t,dx),
$$
and
\begin{align*}
&\int_{H} \mbox{Tr}\big[\sigma\sigma^*D^{2}m(t,dx)\big]\varphi (x)= \int_{H} \mbox{div}\big[\sigma\sigma^*D m(t,dx)\big]\varphi (x)\\
&= \int_{H} \langle \sigma^*D m(t,dx), \sigma^{*}D\varphi (x)\rangle_{H} =   \int_{H}  m(t,dx)\,\, \mbox{div}(\sigma \sigma^{*}D\varphi (x)) =
 \int_{H} \mbox{Tr} \big[\sigma\sigma^{*}D^{2}\varphi(x)\big]\,\, m(t,dx).
\end{align*}
Let $\{{e}_{k}\}_{k\in\mathbb{N}}$ be an orthonormal basis of  $H$ and set the functions $$\pi_k\, : \, H\to \R, \ \ \ \ z\, : \, [0,T]\to H, \ \ \ \ \ z_k\, : \, [0,T]\to H,$$ as
$$\pi_{k}(x):=\langle x,{e}_{k}\rangle_{H},
\ \ \ \ z(t):=\int_H x\,m(t,\textcolor{blu}{dx)},
\ \ \ \
z_k(t):= \langle z(t), {e}_{k}\rangle_{H} =  \int_{H} \pi_{k}(x)m(t,dx).
$$
Using the above formulas and the fact
that $$D(\pi_{k})(x)\equiv\textcolor{blu}{e}_{k}, \ \ \ D^{2}(\pi_{k})(x)\equiv \mathbf{0}_{\mathcal{L}(H)},$$
we get, from \eqref{FP},
\begin{align*}
z_{k}'(t)& =\frac{d}{dt}  \int_{H} \pi_{k}(x)m(t,dx)=\int_{H} \pi_{k}(x)\partial_{t} m(t,dx) \\
&=\int_{H}  \pi_{k}(x) \left(\frac{1}{2}\mbox{Tr}\big[\sigma\sigma^{*}D^2m(t,dx)\big] -\mbox{div}\left(m(t,dx)\left((A-BR^{-1}B^*P(t))(x)-BR^{-1}B^*r(t)\right)\right)\right) \\
&=\int_{H} \langle (A-BR^{-1}B^*P(t))x-BR^{-1}B^*r(t),\, \mathbf{e}_{k}\rangle_{H\,\,} m(t,dx).
\end{align*}
Summing up over $k$, we  get
\begin{align*}
z'(t)&=\sum_{k=1}^{\infty} z'_k(t)e_{k}=\int_{H}  \left((A-BR^{-1}B^*P(t))x-BR^{-1}B^*r(t)\right)  m(t,dx)\\& = \left(A-BR^{-1}B^*P(t)\right) \int_{H}x m(t,dx)- BR^{-1}B^*r(t) \int_{H}m(t,dx)\\
&= (A-BR^{-1}B^*P(t))z(t)-BR^{-1}B^*r(t).
\end{align*}
Hence,
the FP equation reduces to    the following  $H$-valued abstract ODE for $z$:
\begin{equation}\label{eq:z}
\begin{cases}
\displaystyle{z'(t)=(A-BR^{-1}B^*P(t))z(t)-BR^{-1}B^*r(t),}\\\\
z(0)=z_0 :=\displaystyle{\int_{H} x\,\,m_0(dx)} \in H.
\end{cases}
\end{equation}
Moreover, plugging the structure \eqref{v:quadratic} into HJB  \eqref{HJB:bis} and  considering \eqref{gradientsv}, we get for $P(t)$ the backward Riccati equation
\begin{equation}\label{Riccati:P}
\begin{cases}
\displaystyle{P'(t)+P(t)A+A^*P(t)-P(t)BR^{-1}B^*P(t)+Q+\overline Q=0,}\\\\
P(T)=Q_T+\overline Q_T,
\end{cases}
\end{equation}
for $r$ the backward equation
\begin{equation}\label{eq:r}
\begin{cases}
\displaystyle{r'(t)+ (A^*-P(t)BR^{-1}B^*)r(t)-\overline Q Sz(t)=0},\\\\
r(T)=-\overline Q_TS_Tz(T),
\end{cases}
\end{equation}
and the following explicit expression for $s$ in terms of $P,z,r$:
\begin{align}\label{eq:s}
s(t)&=\frac{1}{2}\left\langle\overline Q_TS_Tz(T), S_Tz(T)\right\rangle_H\\
&\ \ +\int_t^T\left[\frac{1}{2}\mbox{Tr}[\sigma\sigma^*P(s)]-\frac{1}{2}\langle BR^{-1}B^*r(s), r(s)\rangle_H+\frac{1}{2}\langle \overline Q Sz(s), Sz(s)\rangle_H\right]ds.\nonumber
\end{align}
With regard to the last four equation written, we notice that the only coupled are the ones for $r$ and $z$, that is \eqref{eq:r} and \eqref{eq:z}. The system formed by these two equations is forward-backward and can be considered as the core reduction of the original HJB--FP system.
\medskip

\blu{We now introduce the concept of a solution that will be applied to the various equations, specifically the so-called \textit{mild solutions}. Since the operator $A$ may be unbounded, the notion of classical solutions (i.e., \( C^1 \) solutions) is not suitable in this context. To overcome this limitation, the infinite-dimensional literature (see \cite{DPB}) employs weaker notions of solutions for ODEs. The concept of mild solutions presented here is based on a generalization of the finite-dimensional \textit{variation of constants formula}.
}
\begin{definition}
\label{df:solsyst}  Denote by  $C_s([0,T];\Sigma^{+}(H))$  the space of strongly continuous operator-valued functions $f:[0,T]\to \Sigma^{+}(H)$, i.e., such that $t\mapsto f(t)x$ is continuous for each $x\in H$.

\begin{enumerate}[(i)]
\item
We say that $P\in  C_s([0,T];\Sigma^{+}(H))$ solves the Riccati equation \eqref{Riccati:P} in mild sense if, for all $x\in H$, $t\in[0,T]$,
\begin{eqnarray}\label{eq:mildP}
P(t)x&=& e^{(T-t)A^{*}}(Q_T+\overline Q_T)e^{(T-t)A}x+\int_{t}^{T} e^{(s-t)A^{*}}(Q+\overline Q)e^{(s-t)A}x\,ds\nonumber\\
&&-\int_{t}^{T}e^{(s-t)A^{*}}(P(s)B R^{-1}B^*P(s))e^{(s-t)A}x\,ds.
\end{eqnarray}
\item Given $P\in  C_s([0,T];\Sigma^{+}(H))$, we say that $(z,r)\in C([0,T];H^{2})$ solves  the forward-backward system \eqref{eq:z}-\eqref{eq:r} in mild sense if, for all $t\in[0,T]$,
\begin{equation}\label{eq:mildz}
z(t)=e^{t A}z_{0}-\int_{0}^{t} e^{(t-s)A} BR^{-1}B^*P(s)z(s) ds- \int_0^{t}e^{(t-s)A}BR^{-1}B^{*}r(s)ds,
\end{equation}
and
\begin{equation}\label{eq:mildr}
r(t)= e^{(T-t)A^{*}}(-\overline Q_{T}S_{T}z(T))-\int_t^{T} e^{(s-t)A^{*}}P(s)BR^{-1}B^{*}r(s)ds
-\int_t^{T}e^{(s-t)A^{*}}\overline Q S z(s)ds.
\end{equation}
\end{enumerate}
\end{definition}
Given the above definitions, we can now provide the following.
\begin{definition}[LQM mild solution to MFG]
\label{df:solMFG}
We say that a $4$-uple $$(P,r,z,s)\in C_s([0,T];\Sigma^{+}(H))\times C([0,T];H^{2})\times C([0,T];\R)$$ is a Linear-Quadratic-Mean (LQM) mild solution to the MFG system \eqref{HJB:bis}-\eqref{FP:bis}   if
\begin{enumerate}[(i)]
\item $P$ solves the Riccati equation \eqref{Riccati:P} in mild sense;
\item The couple $(r,z)$ solves the forward-backward system \eqref{eq:z}-\eqref{eq:r} in mild sense, with $P$ as in item (i);
\item $s$ is given by the expression \eqref{eq:s}, with $P,r,z$  as in items (i)-(ii).
\end{enumerate}
\end{definition}
The following remark will be used in the proof of existence.
\begin{remark}\label{rem:weak-mild}
Other concept of solutions to the above equations may be considered. Indeed, given $P \in C_s([0,T];\Sigma^{+}(H))$,  mild solutions to \eqref{eq:z}-\eqref{eq:r} defined as in \eqref{eq:mildz}-\eqref{eq:mildr} are equivalent to \emph{weak solutions} to the same equations (see \cite[Part II, Ch.\,1, Lemma\,3.2 and Prop.\,3.4]{DPB}); that is,  for all  $\phi \in D(A^*)$, $\psi\in D(A)$, and $t\in[0,T]$,

\begin{align}\label{eq:weak}
\langle \phi, z(t)\rangle_H\nonumber
=&\ \langle \phi, z_0\rangle_H +\int_0 ^{t} \langle A^*\phi, z(s)\rangle_H\,ds-\int_{0}^{t}\langle P(s)BR^{-1}B^*\phi, z(s)\rangle_H\,ds\\& -\int_{0}^{t}\langle \phi, BR^{-1}B^*r(s)\rangle_H\,ds,
\end{align}
and
\begin{align}\label{eq:weak}
\langle \psi, r(t)\rangle_H =\nonumber&\ \langle \psi, -\overline Q_TS_Tz(T)\rangle_H +\int_t ^{T} \langle A\psi, r(s)\rangle_H\,ds\\& -\int_{t}^{T}\langle \psi, P(s)BR^{-1}B^*r(s)\rangle_H\,ds-\int_{t}^{T}\langle \psi, \overline Q S z(s),\phi\rangle_H\,ds.
\end{align}

\end{remark}

\section{Existence of solutions to the MFG system}

Let $M\geq 1$ and $\omega\in \R$ be such that (see \cite[Part II, Ch.\,1, Cor.\,2.1]{DPB})
\begin{equation}\label{est:semi}
\|e^{tA}\|_{\mathcal{L}(H)},  \  \|e^{tA^*}\|_{\mathcal{L}(H)}\leq Me^{\omega t}.
\end{equation}
The Riccati equation \eqref{Riccati:P} is uncoupled and may be studied autonomously.
\begin{proposition}\label{lem:boundriccati}
The  Riccati equation
\eqref{Riccati:P}
admits a unique mild solution $P \in C_s([0,T]; \Sigma^+(H))$. Moreover,
\begin{equation}
\label{supP}
\sup_{t \in [0,T]}\|P(t)\|_{\mathcal{L}(H)}\leq M^2e^{2\omega^{+} T}(\|Q_T+\overline Q_T\|_{\mathcal{L}(H)}+T\|Q+\overline Q\|_{\mathcal{L}(H)}).
\end{equation}
\end{proposition}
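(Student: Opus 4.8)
The plan is to separate the two assertions: existence--uniqueness with positivity, which is classical and for which I would lean on the cited literature, and the a priori bound \eqref{supP}, which I would establish by a direct computation on the mild formulation \eqref{eq:mildP} and which is the genuinely useful ingredient here. For existence and uniqueness, the key structural point is that under Assumption \ref{assumptions} the operators $B$ and $R^{-1}$ are bounded, so the quadratic nonlinearity $P\mapsto P\,BR^{-1}B^*\,P$ is locally Lipschitz in the operator norm on bounded subsets of $\Sigma^{+}(H)$. Hence a Banach fixed-point argument applied to the integral map defined by the right-hand side of \eqref{eq:mildP} yields a unique mild solution on a short interval $[T-\delta,T]$ in a closed ball of $C_s([T-\delta,T];\Sigma^{+}(H))$. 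That the solution is self-adjoint and nonnegative, i.e. $P(t)\in\Sigma^{+}(H)$, follows from the control-theoretic interpretation: $\langle P(t)x,x\rangle_H$ is the value of the LQ problem with nonnegative running cost $\langle(Q+\overline Q)X,X\rangle_H+\langle Ru,u\rangle_U$ and nonnegative terminal cost $\langle(Q_T+\overline Q_T)X(T),X(T)\rangle_H$ (Assumption \ref{assumptions}(iii)--(vi)), so $\langle P(t)x,x\rangle_H\ge 0$. All of this is standard; I would cite \cite{DPB} and \cite{CP74} rather than reprove it.

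To pass from the local to the global solution on $[0,T]$, I would invoke the a priori bound \eqref{supP}. Because that bound confines $\|P(t)\|_{\mathcal{L}(H)}$ to a set depending only on the data and not on the length of the existence interval, the local existence time $\delta$ can be chosen uniformly; iterating the local construction backward in time in steps of length $\delta$ then covers all of $[0,T]$ and rules out blow-up.

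For the bound itself, fix $x\in H$ with $|x|_H=1$ and $t\in[0,T]$, and pair \eqref{eq:mildP} with $x$:
\begin{align}
\langle P(t)x,x\rangle_H
&= \langle (Q_T+\overline Q_T)e^{(T-t)A}x,\,e^{(T-t)A}x\rangle_H
+\int_t^T \langle (Q+\overline Q)e^{(s-t)A}x,\,e^{(s-t)A}x\rangle_H\,ds \nonumber\\
&\quad -\int_t^T \langle R^{-1}B^*P(s)e^{(s-t)A}x,\,B^*P(s)e^{(s-t)A}x\rangle_U\,ds.
\end{align}
Since $R^{-1}\in\Sigma^{+}(U)$, the last integral is nonnegative, so it may be dropped to obtain an upper bound. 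Estimating the two remaining terms by $\|Q_T+\overline Q_T\|_{\mathcal{L}(H)}$ and $\|Q+\overline Q\|_{\mathcal{L}(H)}$ and using the semigroup estimate \eqref{est:semi} in the form $|e^{(s-t)A}x|_H^2\le M^2e^{2\omega^+ T}$ (valid since $0\le s-t\le T$ and $\omega(s-t)\le\omega^+T$) gives
\begin{equation}
\langle P(t)x,x\rangle_H\le M^2e^{2\omega^+ T}\bigl(\|Q_T+\overline Q_T\|_{\mathcal{L}(H)}+(T-t)\|Q+\overline Q\|_{\mathcal{L}(H)}\bigr).
\end{equation}
Because $P(t)$ is self-adjoint and nonnegative, $\|P(t)\|_{\mathcal{L}(H)}=\sup_{|x|_H=1}\langle P(t)x,x\rangle_H$, and taking the supremum over $x$ and then over $t\in[0,T]$ yields \eqref{supP}.

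The step I expect to be the main obstacle, for a fully self-contained argument, is the positivity $P(t)\in\Sigma^{+}(H)$: it cannot be read off \eqref{eq:mildP} directly because the quadratic term enters with a minus sign, yet it is precisely what makes the identity $\|P(t)\|_{\mathcal{L}(H)}=\sup_{|x|_H=1}\langle P(t)x,x\rangle_H$ legitimate in the final step. The cleanest routes are the control-theoretic representation above or a monotone Yosida-regularization argument in which $A$ is replaced by its bounded approximants and positivity is preserved in the limit. Since the result is classical, however, I would rely on the cited literature for existence, uniqueness, and positivity, and present the estimate \eqref{supP} in full.
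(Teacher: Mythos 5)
Your proposal matches the paper's proof essentially step for step: the paper likewise delegates existence, uniqueness, and positivity to the cited literature (\cite{DPB}, Part IV, Ch.~2, Thm.~2.1, and \cite{CP74}) and then proves \eqref{supP} by exactly your computation --- pairing \eqref{eq:mildP} with $x$, dropping the quadratic term by nonnegativity of $R^{-1}$, applying \eqref{est:semi}, and concluding via the identity $\|\mathcal{Q}\|_{\mathcal{L}(H)}=\sup_{|x|_H=1}|\langle \mathcal{Q}x,x\rangle_H|$ for self-adjoint operators. Your additional sketch of the fixed-point construction and the control-theoretic positivity argument is sound but is precisely the content the paper (correctly, as you do) outsources to the references.
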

\begin{proof}
For the existence and uniqueness of a mild solution in $C_s([0,T]; \Sigma^+(H))$ see \cite{DPB}, Part IV, Chapter 2, Theorem $2.1$ for the case $R=I$. The proof can be modified to cover also our case.\footnote{See also, for the general case,  the paper \cite{CP74} Theorem $4.1$ and Theorem $4.2$  (with $\mathcal{W}=Q+\overline Q$ and $\mathcal{G}=Q_T+\overline Q_T$) or the book \cite{CP78}, Chapter $4$, Lemma $4.3$ and Lemma $4.6$ (with $M=Q+\overline Q$ and $G=Q_T+\overline Q_T$), dealing however with other concepts of solutions.}
Let us  prove \eqref{supP}. Let $x\in H$. By definition of mild solution,    we have
\begin{align*}
\langle P(t)x,x\rangle_H=& \ \langle e^{(T-t)A^*}(Q_T+\overline Q_T)e^{(T-t)A}x, x\rangle_H+\int_t^T\langle e^{(s-t)A^*}(Q+\overline Q)e^{(s-t)A}x,x\rangle_H\,ds\\&-\int_t^T\langle R^{-1}B^*P(s)e^{(s-t)A}x,B^*P(s)e^{(s-t)A}x\rangle_U\,ds.
\end{align*}
Since $R^{-1}$ is nonnegative,
 we get
\begin{align*}
\langle P(t)x,x\rangle_H& \leq\langle e^{(T-t)A^*}(Q_T+\overline Q_T)e^{(T-t)A}x, x\rangle_H+\int_t^T\langle e^{(s-t)A^*}(Q+\overline Q)e^{(s-t)A}x,x\rangle_Hds\\
 &\leq \|e^{(T-t)A^*}(Q_T+\overline Q_T)e^{(T-t)A}\|_{\mathcal{L}(H)}\,|x|^{2}_H+\int_t^T\|e^{(s-t)A^*}(Q+\overline Q)e^{(s-t)A}\|_{\mathcal{L}(H)}\,|x|^{2}_Hds.\\
 &\leq M^2e^{2\omega^{+} T}(\|Q_T+\overline Q_T\|_{\mathcal{L}(H)}+T\|Q+\overline Q\|_{\mathcal{L}(H)})\,|x|^{2}_H.
 \end{align*}
Recalling that $P\in \Sigma^{+}(H)$,
we therefore conclude because of the equality
$$
\|\mathcal{Q}\|_{\mathcal{L}(H)}=\sup_{|x|_H=1}|\langle \mathcal{Q} x,x\rangle_{H}|, \ \ \ \ \forall \mathcal{Q}\in \Sigma(H).
$$
\end{proof}
Let $(A_n)_{n\in\mathbb{N}}\subset \mathcal{L}(H)$ be the Yosida approximations of the operator $A$ defined as
\begin{equation}\label{Yosida}
A_{n}=n^2R(n,A)-nI,
\end{equation}
where $R(n,A)$ is the resolvent operator of $A$.
 For future reference, we recall some properties concerning them (see \cite[p.\,102]{DPB}): we have
 \begin{equation}\label{sexiest}
M_T:=\sup_{t\in[0,T],\,n\in\mathbb{N}}\|e^{tA_n}\|_{\mathcal{L}(H)}= \sup_{t\in[0,T],\,n\in\mathbb{N}}\|e^{tA_n^{*}}\|_{\mathcal{L}(H)}<\infty,
\end{equation}
\begin{equation}\label{semiconv}
e^{tA_{n}}\to e^{tA} x,  \ \ \ e^{tA^{*}_{n}}\to e^{tA^{*}} x, \ \ \ \ \forall t\in[0,T], \ \forall x\in H,
\end{equation}
and
\begin{equation}\label{A*conv}
 A_nx \to Ax  \ \ \ \forall x\in D(A), \ \ \ \ \ \ \ \ \  A^*_nx \to A^*x \ \ \ \ \forall x\in D(A^*).
\end{equation}
\medskip

In order to prove existence for the system \eqref{eq:z}-\eqref{eq:r}, we need the following.

\begin{assumption}\label{ass:etan}

Let $P \in C_s([0,T]; \Sigma^+(H))$ be the mild solution to the Riccati equation \eqref{Riccati:P} provided by Proposition \ref{lem:boundriccati} and let $(A_{n})$ be the sequence of operators defined by \eqref{Yosida}.
\smallskip
 \begin{itemize}
\item[(H1)] For $n\in\mathbb{N}$, the Riccati equations
\begin{equation}\label{eqn:riccatieta}
\begin{cases}
 \eta_n'(t)=&(P(t)BR^{-1}B^*-A_n^*)\eta_n(t)-\eta_n(t)(A_n-BR^{-1}B^*P(t)) \\&+\overline Q S+\eta_n(t)BR^{-1}B^*\eta_n(t),\\\\
 \eta_n(T)=&-\overline Q_T S_T,
 \end{cases}
\end{equation}
admit strict solutions in the space $C^{1}_{s}([0,T];\Sigma(H))$; that is, there exist $\eta_{n}:[0,T]\to \Sigma(H)$ such that, for all $n\in\mathbb{N}$,
\medskip

\begin{enumerate}[(i)]
\item $ \eta_n(T)=-\overline Q_T S_T$;

\item  the map $[0,T]\to H$,  $t\mapsto \eta_{n}(t)x$ is differentiable for each $t\in [0,T]$ and $x\in H$;

\item  for all $t\in [0,T]$, $x\in H$, it holds the equality
\begin{align*}
\frac{d}{dt}\eta_n(t)x
=&\ (P(t)BR^{-1}B^*-A_n^*)\eta_n(t)x-\eta_n(t)(A_n-BR^{-1}B^*P(t))x \\&+\overline Q Sx+\eta_n(t)BR^{-1}B^*\eta_n(t)x.
\end{align*}
\end{enumerate}
\item[(H2)] Under (H1), we have
\begin{equation}\label{estunifeta}
\sup_{t\in [0,T], n\in\mathbb{N}}\|\eta_{n}(t)\|_{\mathcal{L}(H)} <\infty.
\end{equation}
\end{itemize}
\end{assumption}
\begin{remark} An inspection of the use of Assumption \ref{ass:etan} in the proof of Theorem \ref{Th:existence} shows that the former  may be relaxed by requiring that (H1) holds just definitively in $n$ and by replacing (H2) with
$$
\liminf_{n\to \infty}\sup_{t\in [0,T]}\|\eta_{n}(t)\|_{\mathcal{L}(H)} <\infty.
$$
\end{remark}
We will discuss the validity of Assumption \ref{ass:etan} in the Appendix (see Proposition \ref{prop:positive}). Here, we only notice that it is satisfied, in particular, without further assumptions if $T$ is small enough (cf. Remark \ref{rem:tsmall}).
We turn now to our existence result.

\begin{theorem}[Existence]\label{Th:existence}
Let Assumption \ref{ass:etan} hold. Then,
there exists a LQM mild solution to MFG.
\end{theorem}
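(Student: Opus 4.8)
The plan is to produce the pair $(z,r)$ solving the forward--backward system \eqref{eq:z}--\eqref{eq:r} in the mild sense, since the remaining components of a LQM mild solution come essentially for free: $P$ is delivered by Proposition \ref{lem:boundriccati}, and $s$ is then \emph{defined} by the explicit formula \eqref{eq:s}. Fixing this $P$, I would first break the forward--backward coupling by the Riccati substitution, working at the level of the Yosida regularizations $A_n$. Under (H1) of Assumption \ref{ass:etan} there is a strict solution $\eta_n$ of \eqref{eqn:riccatieta}; I then define $z_n$ as the solution of the decoupled, bounded-coefficient Cauchy problem
\begin{equation*}
z_n'(t)=\big(A_n-BR^{-1}B^*P(t)-BR^{-1}B^*\eta_n(t)\big)z_n(t),\qquad z_n(0)=z_0,
\end{equation*}
which is well posed because $A_n\in\mathcal{L}(H)$ and $t\mapsto P(t),\eta_n(t)$ are strongly continuous and bounded, and set $r_n(t):=\eta_n(t)z_n(t)$. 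Differentiating $r_n$ and inserting \eqref{eqn:riccatieta} produces exactly $r_n'+(A_n^*-PBR^{-1}B^*)r_n-\overline Q S z_n=0$ with $r_n(T)=-\overline Q_TS_Tz_n(T)$, so $(z_n,r_n)$ is a classical, hence mild, solution of \eqref{eq:z}--\eqref{eq:r} with $A,A^*$ replaced by $A_n,A_n^*$.

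Next I would collect the uniform-in-$n$ bounds that drive the compactness. Proposition \ref{lem:boundriccati} gives $\sup_t\|P(t)\|_{\mathcal{L}(H)}<\infty$, hypothesis (H2) gives $\sup_{n,t}\|\eta_n(t)\|_{\mathcal{L}(H)}<\infty$, and \eqref{sexiest} gives $\sup_{n,t}\|e^{tA_n}\|_{\mathcal{L}(H)}=M_T<\infty$. Writing the variation-of-constants identity for $z_n$, estimating with these bounds and applying Grönwall's inequality yields $\sup_n\sup_{t}|z_n(t)|_H<\infty$, whence also $\sup_n\sup_t|r_n(t)|_H<\infty$ since $r_n=\eta_n z_n$.

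The core of the argument is a weak compactness / limit-passage step, which I expect to be the main obstacle. Since bounded sets of $H$ are weakly relatively compact, the families $(z_n(t))$, $(r_n(t))$ are pointwise weakly precompact; to invoke Ascoli--Arzel\'a in the weak topology I still need weak equicontinuity, uniform in $n$. For $\phi\in D(A^*)$ the scalar maps $t\mapsto\langle\phi,z_n(t)\rangle_H$ are $C^1$ with $\frac{d}{dt}\langle\phi,z_n(t)\rangle_H=\langle A_n^*\phi,z_n(t)\rangle_H-\langle\phi,BR^{-1}B^*(P(t)z_n(t)+r_n(t))\rangle_H$, and since $A_n^*\phi\to A^*\phi$ by \eqref{A*conv} the quantity $\sup_n|A_n^*\phi|_H$ is finite; together with the uniform bounds this makes these derivatives bounded uniformly in $n$, giving equicontinuity on the dense set $D(A^*)$, which upgrades to weak equicontinuity via the uniform bound. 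A diagonal extraction then provides a subsequence with $z_n(t)\rightharpoonup z(t)$ and $r_n(t)\rightharpoonup r(t)$ weakly in $H$, uniformly in $t$, for some $z,r\in C([0,T];H_w)$ (continuity into $H$ endowed with the weak topology).

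Finally I would pass to the limit in the approximated mild identities. The delicate point is that each integrand couples a semigroup that converges only \emph{strongly} ($e^{(t-s)A_n^*}\phi\to e^{(t-s)A^*}\phi$ by \eqref{semiconv}) with factors $z_n,r_n$ that converge only \emph{weakly}; this is handled by the elementary fact that $\langle u_n,w_n\rangle_H\to\langle u,w\rangle_H$ whenever $u_n\to u$ strongly and $w_n\rightharpoonup w$ weakly. Testing the $z_n$-identity against an arbitrary $\phi\in H$, rewriting $\langle\phi,e^{(t-s)A_n}(\cdot)\rangle_H=\langle e^{(t-s)A_n^*}\phi,(\cdot)\rangle_H$, and using dominated convergence in $s$ (the integrands being uniformly bounded and converging pointwise) delivers \eqref{eq:mildz} for the limit $(z,r)$; the same computation on the backward identity, with $z_n(T)\rightharpoonup z(T)$ in the terminal term, delivers \eqref{eq:mildr}. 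Because the right-hand sides of \eqref{eq:mildz}--\eqref{eq:mildr} are strongly continuous in $t$ (strong continuity of the semigroups plus Bochner integrability of the integrands, the latter following from weak continuity and separability of $H$), the limit $(z,r)$ in fact lies in $C([0,T];H^2)$ and solves \eqref{eq:z}--\eqref{eq:r} in the mild sense. Setting $s$ as in \eqref{eq:s} for these $P,r,z$ then completes the construction of an LQM mild solution.
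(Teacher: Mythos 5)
Your proposal is correct and follows the same overall architecture as the paper: Yosida approximants $A_n$, decoupling via $r_n=\eta_n z_n$ with $\eta_n$ from Assumption \ref{ass:etan}, uniform Gronwall bounds, Ascoli--Arzel\`a in the weak topology metrized on a ball, and a limit passage to the mild system, with $P$ from Proposition \ref{lem:boundriccati} and $s$ defined by \eqref{eq:s}. You diverge from the paper at two technical junctures, and both of your variants work. First, for equicontinuity: the paper subtracts the non-equicontinuous semigroup parts, setting $\tilde z_n(t)=z_n(t)-e^{tA_n}z_0$ and $\tilde r_n(t)=r_n(t)-e^{(T-t)A_n^*}(-\overline Q_TS_Tz_n(T))$, proves \emph{norm}-Lipschitz-in-$t$ bounds for these (hence $d$-equicontinuity via \eqref{distnorm}), and recovers $z_n,r_n$ afterwards using the uniform strong convergence $e^{tA_n}z_0\to e^{tA}z_0$ and Lemma \ref{lem:convT}; you instead get weak equicontinuity of $z_n,r_n$ themselves by differentiating $t\mapsto\langle\phi,z_n(t)\rangle_H$ for $\phi\in D(A^*)$ (and, symmetrically, $\psi\in D(A)$ for $r_n$ --- you should state this explicitly), using $\sup_n|A_n^*\phi|_H<\infty$ from \eqref{A*conv} and density of $D(A^*)$, $D(A)$ in $H$. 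The paper's route yields a quantitative Lipschitz modulus and avoids the density/approximation step in the metric $d$; yours avoids introducing the auxiliary tilde functions. Second, for the limit passage: the paper goes through the \emph{weak} (test-function) formulation of Remark \ref{rem:weak-mild} and then invokes the weak--mild equivalence from \cite{DPB}, whereas you pass to the limit directly in the variation-of-constants identities, pairing strongly convergent semigroup terms $e^{(t-s)A_n^*}\phi\to e^{(t-s)A^*}\phi$ against weakly convergent $z_n,r_n$ with dominated convergence --- in effect inlining the paper's Lemma \ref{lem:convT} --- and you then check strong continuity of the limit from the mild representation, a point the paper delegates to the cited equivalence. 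Your direct route is slightly more self-contained; the paper's route offloads the measurability/continuity bookkeeping to the standard weak--mild equivalence.
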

\begin{proof}
The existence (and uniqueness) of mild solutions to \eqref{Riccati:P} in $C_s([0,T], \Sigma(H)^+)$ is provided by Proposition \ref{lem:boundriccati}.

In order to show the existence of solutions to
\eqref{eq:z}-\eqref{eq:r},  we proceed in several steps. First,  we consider approximating versions of \eqref{eq:z}-\eqref{eq:r} with the Yosida approximations $A_n$ in place of $A$ and find a solution $(z_n,r_n)$ by decoupling the system inspired by the finite dimensional case (see \cite{BeFrYa}, Chapter 6). Second, we prove estimates, uniform in $n$, on the constructed solution $(z_n,r_n)$ of the approximating system. Third, we pass to the limit as $n\to \infty$ by applying  Ascoli-Arzel\'a's Thorem  in metric spaces to get  the (weak) convergence of  $(z_n, r_n)$ to a couple $(z,r)$. Fourth, we show that the limit  $(z,r)$  solves \eqref{eq:z}-\eqref{eq:r}. \\

\begin{enumerate}[\emph{Step 1.}]
\item
 We consider the approximating systems
\begin{equation}\label{eq:zetayosida}
\begin{cases}
\displaystyle{z_n'(t)=(A_n-BR^{-1}B^*P(t))z_n(t)-BR^{-1}B^*r_n(t),}\\\\
z_n(0)=z_0,
\end{cases}
\end{equation}
and
\begin{equation}\label{eq:ryosida}
\begin{cases}
\displaystyle{r_n'(t)+ (A_n^*-P(t)BR^{-1}B^*)r_n(t)-\overline Q Sz_n(t)=0},\\\\
r_n(T)=-\overline Q_TS_Tz_n(T).
\end{cases}
\end{equation}
For the ODEs considered in this step, we use the notion of so called \emph{strict solutions}; that is, functions in the space $C^{1}([0,T];H)$ which satisfy the ODEs in the classical sense for all $t\in[0,T]$(\footnote{This corresponds to the terminology of \cite{DPB} (Definition 3.1(i), p.\,129), combined with Proposition 3.3(ii), p.\,133. Cf. also the definition given in Assumption \ref{ass:etan} for the operator-valued Riccati equation.}).
To decouple the system, let us assume that a  solution in this sense to  \eqref{eq:zetayosida}-\eqref{eq:ryosida}
exists in the form
 \begin{equation}\label{eq:rprod}
 r_n(t)=\eta_n(t)z_n(t),
 \end{equation} with $\eta_n\in C_{s}^{1}([0,T];{\Sigma}(H))$, $\eta_{n}(T)=-\overline Q_{T}S_{T}$,
and $z_n \in C^1([0,T],H)$. \textcolor{blu}{We can decouple the system in this manner because the solutions of the approximating system \eqref{eq:zetayosida}-\eqref{eq:ryosida} are strict. This is due to the fact that the operators $A_n$ are bounded, allowing us to apply the classical product rule to differentiate \eqref{eq:rprod}. This is not feasible in the original Riccati system, as in that case, the solutions cannot generally be assumed to be strict.}

Imposing this structure, one  formally  gets
\begin{equation}\label{eq:r1}
r'_n(t)=\eta_n'(t)z_n(t)+\eta_n(t) z'_n(t),
\end{equation}
and,
plugging into \eqref{eq:ryosida}, one gets
\begin{equation}\label{eq:r2}
\eta_n(t)z_n'(t)=(P(t)BR^{-1}B^*-A_n^*)\eta_n(t)z_n(t)+\overline QS z_n(t)- \eta'_n(t)z_n(t).
\end{equation}
On the other hand, plugging into \eqref{eq:zetayosida}, we get
\begin{equation}\label{eq:r3}
\eta_n(t)z_n'(t)=\eta_n(t)(A_n-BR^{-1}B^*P(t))z_n(t)-\eta_n(t)BR^{-1}B^*\eta_n(t)z_n(t).
\end{equation}
Equating  the two expressions above, we get the following equation for $\eta_n$:
\begin{equation*}\label{eq:eta1}
 \eta_n'(t)=(P(t)BR^{-1}B^*-A_n^*)\eta_n(t)+\overline Q S-\eta_n(t)(A_n-BR^{-1}B^*P(t))+\eta_n(t)BR^{-1}B^*\eta_n(t).
\end{equation*}
 In this way, we
 have disentagled the system  into
 \begin{equation}\label{eq:eta2}
\begin{cases}
 \eta_n'(t)=(P(t)BR^{-1}B^*-A_n^*)\eta_n(t)-\eta_n(t)(A_n-BR^{-1}B^*P(t))\\
 \ \ \ \ \ \ \ \  \ \ \ \ +\overline Q S+\eta_n(t)BR^{-1}B^*\eta_n(t),\\\\
 \eta_n(T)=-\overline Q_T S_T,
 \end{cases}
\end{equation}
and
\begin{equation}\label{eq:zetayosidabis}
\begin{cases}
\displaystyle{z_n'(t)=(A_n-BR^{-1}B^*P(t))z_n(t)-BR^{-1}B^*\eta_n(t)z_n(t),}\\\\
z_n(0)=z_0.
\end{cases}
\end{equation}
By Assumption  \ref{ass:etan},
 \eqref{eq:eta2}
is a Riccati equation admitting a strict solution $\eta_{n}$.
Plugging its expression into \eqref{eq:zetayosidabis}, one gets a corresponding unique strict solution  $z_{n}$ to the latter. Then,  defining $r_{n}$ as in  \eqref{eq:rprod}, one may use the formal computations \eqref{eq:r1}--- \eqref{eq:eta2} and conclude that  $r_{n}$ so defined  is actually a strict solution to \eqref{eq:ryosida}. Hence, the couple $(z_n,r_n)$ so constructed is a strict solution to the coupled system \eqref{eq:zetayosida}-\eqref{eq:ryosida}.\\




\item[\emph{Step 2.}]
Let $z_{n},r_{n},\eta_{n}$ be the functions defined in Step 1. We are going to give estimates uniform in $n$ for $z_{n},r_{n}$.  In the following, $C$ will be a positive constant,  depending on the data of the problem but independent of $n\in \mathbb{N}$, which may change from line to line.\\

%
%

i) \emph{(Estimates on $z_n$)}
Clearly, being $z_n$ a strict solution to \eqref{eq:zetayosidabis}, it is also a mild solution to the same equation; that is, for all $t\in[0,T]$,
\begin{equation}\label{eq:mildzn}
z_{n}(t)=e^{t A_{n}}z_{0}-\int_{0}^{t} e^{(t-s)A_{n}} BR^{-1}B^*P(s)z_{n}(s) ds- \int_0^{t}e^{(t-s)A_{n}}BR^{-1}B^{*}\eta_{n}(s)z_{n}(s)ds.
\end{equation}
 Then,
using
\eqref{supP}, \eqref{estunifeta} and
\eqref{sexiest},
 we get
$$
|z_n(t)|_H\leq C\left(|z_0|_H+\int_0^t
|z_n(s)|_Hds\right),  \ \ \ \forall  t\in[0,T].
$$
By the
 Gronwall's Lemma, we then get
\begin{equation}\label{eqn:boundznbis}
|z_n(t)|_H\leq C \ \ \ \forall t\in[0,T].
\end{equation}
Set now
\begin{equation}\label{deftildez}
\tilde{z}_{n}(t):=z_{n}(t)-e^{tA_n}z_{0}, \ \ \ \forall  t\in[0,T].
\end{equation}
By \eqref{eqn:boundznbis} and \eqref{sexiest}, we get
\begin{equation}\label{eqn:boundzn}
|\tilde{z}_n(t)|_H\leq C,  \ \ \ \forall  t\in[0,T].
\end{equation}
Then, still employing \eqref{eq:mildzn} and using \eqref{eqn:boundznbis}, we also get
the estimate
\begin{equation}\label{est:z}
|\tilde{z}_n(t)-\tilde{z}_n(s)|_H\leq C|t-s|,\ \ \ \ t,s\in [0,T].
\end{equation}

%
ii) \emph{(Estimates on $r_n$)}
For the sequence $\{r_{n}:[0,T]\to H\}_{n\in \mathbb{N}}$, we proceed similarly. We have, by the mild formulation
\begin{align}\label{eqn:mildrn}
r_n(t)=&\ e^{(T-t)A_n^*}(-\overline Q_T S_T z_n(T))-\int_t^Te^{(s-t)A_n^*}P(s)BR^{-1}B^*r_n(s)ds\nonumber\\
&-\int_t^Te^{(s-t)A_n^*}\overline Q S z_n(s)ds,   \ \ \ \forall  t\in[0,T].
\end{align}
 Then,
using
\eqref{supP}, \eqref{sexiest} and \eqref{eqn:boundznbis},
 we get
$$
|r_n(t)|_H\leq C\left(|z_{n}(T)|_H+\int_t^T
|r_n(s)|_Hds\right),   \ \ \ \forall  t\in[0,T].
$$
By  \eqref{eqn:boundznbis} and the
 Gronwall's Lemma, we then get
\begin{equation}\label{eqn:boundznbisr}
|r_n(t)|_H\leq C, \ \ \ \forall t\in[0,T].
\end{equation}
Set now
\begin{equation}\label{deftilder}
\tilde{r}_{n}(t):=r_{n}(t)-e^{(T-t)A_n^{*}}(-\overline Q_T S_T z_n(T)), \ \ \ t\in[0,T].
\end{equation}
By \eqref{eqn:boundznbisr}, \eqref{eqn:boundznbis}, and \eqref{sexiest}, we get
\begin{equation}\label{eqn:boundrn}
|\tilde{r}_n(t)|_H\leq C.
\end{equation}
Then, still employing \eqref{eqn:mildrn} and using \eqref{supP}, \eqref{sexiest}, \eqref{eqn:boundzn}, and \eqref{eqn:boundznbisr}, we  get
the estimate
\begin{equation}\label{est:z}
|\tilde{r}_n(t)-\tilde{r}_n(s)|_H\leq C|t-s|,\ \ \ \ t,s\in [0,T].
\end{equation}\medskip
%

\smallskip

\item[\emph{Step 3.}] We are going to prove that the sequence $\{(z_n,r_{n})\}_{n\in\mathbb{N}}$ admits a subsequence converging, in a suitable sense, to a limit $(z,r)$. Let $\rho>0$ be such that $\tilde{z}_{n}$ and $\tilde{r}_n$ defined in the previous step take value in $$\mathcal{B}_{\rho}:=\{x\in H: \ |x|_H\leq \rho\}$$ for all $n\in\mathbb{N}$ (see \eqref{eqn:boundzn}-\eqref{eqn:boundrn}).
The weak topology of the separable Hilbert space $H$ is metrizable on the ball
$\mathcal{B}_{\rho}$ (see, e.g.,  \cite[Th.\,3.29]{BR}). Precisely,  letting $\{a_n\}_{n\in\mathbb{N}}$ be a dense subset of $\mathcal{B}_{\rho}$, the distance
$$
d(x,y)=\sum_{n\in\mathbb{N}} 2^{-(n+1)} |\langle x-y,a_{n}\rangle_H|, \ \ \ x,y\in \mathcal{B}_{\rho},
$$
induces the weak topology on $\mathcal{B}_{\rho}$.
Notice that
\begin{equation}\label{distnorm}
d(x,y)\leq \rho |x-y|_H, \ \ \ x,y\in \mathcal{B}_{\rho}.
\end{equation}
Then, given Step 2 and \eqref{distnorm}, we may  apply Ascoli-Arzel\`a's Theorem in the space $C([0,T]; (\mathcal{B}_{\rho},d))$ and get the existence of a subsequence, that with abuse notation we still denote by $\{(\tilde{z}_{n},\tilde{r}_n)_{n}\}$, and of a couple $\tilde{z},\tilde{r}\in C([0,T];(\mathcal{B}_{\rho},d))$ such that

%
%
%

\begin{equation}\label{weaker}
\lim_{n\to\infty} \sup_{[0,T]} \big(d(\tilde{z}_n(t),\tilde{z}(t))+d(\tilde{r}_n(t),\tilde{r}(t))\big)=0,
\end{equation}
In particular, denoting by $\rightharpoonup$  the weak convergence in $H$,
\begin{equation}\label{weakconvbis}
\tilde{z}_{n}(t) \rightharpoonup \tilde{z}(t), \ \ \ \tilde{r}_n(t) \rightharpoonup \tilde{r}(t), \ \ \ \ \forall t\in[0,T].
\end{equation}
On the other hand, by \cite{DPB}, Part II, Chapter 1, Theorem 2.5 we have
\begin{equation}\label{est:yosida}
\lim_{n\to\infty}\sup_{t\in[0,T]} |(e^{tA}-e^{tA_n})x|_H=0, \ \ \ \forall x\in H.
\end{equation}
Therefore, from \eqref{weakconvbis} and \eqref{est:yosida}, it follows that
\begin{equation}\label{weakconv}
z_n(t)=\tilde{z}_{n}(t) + e^{tA_n}z_0 \ \ {\rightharpoonup} \ \ \tilde{z}(t)+ e^{tA}z_0=:z(t)\ \ \ \ \forall t\in[0,T].
\end{equation}

We may argue similarly for $r_{n}$ as follows.
First of all, we  note that by Lemma \ref{lem:convT} with
$$F_n=e^{(T-t)A_n^*}, \ \ \ x_n=-\overline Q_TS_Tz_n(T),$$ we obtain
\begin{equation}
\label{eqzz}
e^{(T-t)A^{*}_n}(-\overline Q_T S_T z_n(T)) \ \ \rightharpoonup  \ \
e^{(T-t)A^{*}} (-\overline Q_T S_T z(T)), \ \ \ \ \forall t\in[0,T].
\end{equation}
Accounting for \eqref{weakconvbis}, it follows
\begin{align}\label{weakconvr}
r_n(t)&=
\tilde{r}_{n}(t) + e^{(T-t)A^{*}_n}(-\overline Q_T S_T z_n(T)) \nonumber \\
&\ \ \ \ \ \ \rightharpoonup \ \
\tilde{r}(t)+ e^{(T-t)A^{*}} (-\overline Q_T S_T z(T))=:r(t)\ \ \ \forall t\in[0,T].
\end{align}
Notice  that $z,r\in C([0,T];H_{w})$,  where $H_{w}$ denotes the space $H$ endowed with the weak topology.
Hence, we have proved that it exists a subsequence of $\{(z_{n},r_{n})_{n}\}$, still labeled in the same way, and $(z,r)\in C([0,T];H_{w})$ such that
\begin{equation}\label{weakconvbis2}
{z}_{n}(t) \rightharpoonup {z}(t), \ \ \ {r}_n(t) \rightharpoonup {r}(t), \ \ \ \ \forall t\in[0,T].
\end{equation}
\medskip
\item[\emph{Step 4.}]
Let us show that $z$ defined by Step 3 solves \eqref{eq:mildz}.
Due to Remark \ref{rem:weak-mild}, $z_n$ is also a weak solution to \eqref{eq:zetayosida}, that is
for all $\phi \in D(A^*)$, $t\in[0,T]$,
 \begin{align}\label{weaksol}
\langle \phi, z_n(t)\rangle_H=&\ \langle \phi, z_0\rangle_H+\int_0^t\langle A_n^*\phi, z_n(s)\rangle_H\,ds -\int_0^t\langle P^*BR^{-1}B^*\phi, z_n(s)\rangle_H\,ds \\
&-\int_0^t\langle \phi, BR^{-1}B^*r_n(s)\rangle_H\,ds\nonumber.
\end{align}
We want now to take the limit as $n\to\infty$.
By \eqref{weakconvbis2}, we have for each $t,s\in[0,T]$,
\begin{equation}\label{convex}
\begin{cases}\langle \phi, z_n(t)\rangle_H \rightarrow \langle \phi, z(t)\rangle_H,\medskip\\
 \langle P^*BR^{-1}B^*\phi, z_n(s)\rangle_H\rightarrow \langle P^*BR^{-1}B^*\phi, z(s)\rangle_H,\medskip\\
\langle \phi, BR^{-1}B^*r_n(s)\rangle_H \rightarrow \langle \phi, BR^{-1}B^*r(s)\rangle_H.
\medskip
\end{cases}
\end{equation}
Moreover, for each $s\in [0,T]$,
\begin{equation}\label{AAn}
|\langle A_n^*\phi, z_n(s)\rangle_H -\langle A^*\phi, z(s)\rangle_H|\leq |\langle (A_n^*-A^*)\phi, z_n(s)\rangle_H|+|\langle A^*\phi, z_n(s)-z(s)\rangle_H|.
\end{equation}
Now, on the one hand, by \eqref{weakconvbis2}
\begin{equation}\label{AAn2}
|\langle A^*\phi, z_n(s)-z(s)\rangle_H |\rightarrow 0;
\end{equation}
on the other hand, by \eqref{eqn:boundzn}, we have
\begin{equation}\label{AAn3}
|\langle (A_n^*-A^*)\phi, z_n(s)\rangle_H|\leq |(A_n^*-A^*)\phi|_H|z_n(s)|_H\leq C|(A^*_n-A^*)\phi|_H\rightarrow 0,
\end{equation}
where the latter convergence follows from \eqref{A*conv}. Therefore, combining \eqref{AAn}, \eqref{AAn2}, and \eqref{AAn3},  we get, for each $t\in[0,T]$,
\begin{equation}\label{convA*}
\langle A_n^*\phi, z_n(t)\rangle_H \rightarrow \langle A^*\phi, z(t)\rangle_H.
\end{equation}
Noting that, definitively in $n$,
$$
\sup_{t\in[0,T]}|\langle A^*_n \phi, z_n(t)\rangle_H|\leq \sup_{[0,T]}|z_n(s)|_H|A_n^*\phi|_H\leq |A^*\phi|_H+1,
$$
we may use dominated convergence to pass to the limit  in \eqref{weaksol} and use \eqref{convex} and \eqref{convA*} to conclude that
\begin{align*}
\langle \phi, z(t)\rangle_H=& \ \langle \phi, z_0\rangle_H +\int_0^t\langle A_n^*\phi, z(s)\rangle_H\,ds \\&+\int_0^t\langle P^*BR^{-1}B^*\phi, z(s)\rangle_H\,ds -\int_0^t\langle \phi, BR^{-1}B^*r(s)\rangle_H\,ds.
\end{align*}
This says that $z$ is a weak solution to \eqref{eq:z}. By Remark \ref{rem:weak-mild}, it is also a mild solution to the same equation, i.e. solves \eqref{eq:mildz}.

In a similar way, one may prove that that $r$ defined by Step 3 solves \eqref{eq:mildr}, concluding the proof.
\end{enumerate}
\end{proof}
\section{Uniqueness of solutions}
In this section, we prove  two uniqueness results. We start with a result of this kind under the assumption that $T$ is small enough\footnote{Notice that the argument is based on the standard contraction fixed point, hence it would provide also existence.}.
Set
$$
C_{BR}:=\|BR^{-1}B^*\|_{\mathcal{L}(H)}, \quad C_{\overline{Q}_TS_T}:=\|\overline Q_TS_T\|_{\mathcal{L}(H)}, \quad C_{QS}:=\|QS\|_{\mathcal{L}(H)}.
$$
\blu{Notice that the proof of the following result also provide, at once, the existence for small time horizon, as it is based on the classic contraction argument (cf. also \cite{liu2024hilbert}).}
\begin{proposition}[Uniqueness for small time horizon]\label{prop:small}
Let $T>0$ be such that\footnote{Clearly, since
$$
\lim_{T \to 0} M^2(C_{\overline{Q}_TS_T}+C_{QS}T)TC_{BR}e^{2Me^{\omega T}C_{BR}\beta T+2\omega T}=0,
$$
there exists $T>0$ such that \eqref{Tsmall} holds.
}
\begin{equation}\label{Tsmall}
C_{T}:=M^2(C_{\overline{Q}_TS_T}+C_{QS}\,\,T)\,\,TC_{BR}e^{2Me^{\omega T}C_{BR}\beta T+2\omega T}\ <\ 1,
\end{equation}
with $M,\omega$ as in \eqref{est:semi}.
Then, the LQM mild solution to MFG is unique.
\end{proposition}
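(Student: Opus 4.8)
The plan is to reduce uniqueness of the full LQM mild solution to uniqueness of the pair $(z,r)$, and then to run an interlocking Gronwall argument whose overall Lipschitz constant turns out to be exactly $C_T$. By Proposition \ref{lem:boundriccati} the component $P$ is uniquely determined, and by Definition \ref{df:solMFG}(iii) the component $s$ is an explicit functional of $(P,r,z)$ through \eqref{eq:s}; hence two LQM mild solutions can differ only through their $(z,r)$ components, and it suffices to show that the mild system \eqref{eq:mildz}-\eqref{eq:mildr}, with this fixed $P$, has at most one solution in $C([0,T];H^{2})$. Equivalently, this says that the solution map $\Phi$ — sending a candidate $z$ to the unique Volterra solution $r$ of \eqref{eq:mildr}, and then $r$ to the forward solution of \eqref{eq:mildz} — is a contraction, so the very same estimates also yield existence by Banach's fixed point theorem, as anticipated in the footnote.

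Concretely, let $(z_1,r_1)$ and $(z_2,r_2)$ be two mild solutions and set $\delta z:=z_1-z_2$, $\delta r:=r_1-r_2$. Subtracting the two copies of \eqref{eq:mildz} the term $e^{tA}z_0$ cancels, while subtracting the two copies of \eqref{eq:mildr} the terminal data contribute $-e^{(T-t)A^{*}}\overline Q_TS_T\,\delta z(T)$. Writing $\beta:=\sup_{t\in[0,T]}\|P(t)\|_{\mathcal L(H)}$ (finite by \eqref{supP}) and using the uniform semigroup bound \eqref{est:semi}, namely $\|e^{tA}\|_{\mathcal L(H)},\|e^{tA^{*}}\|_{\mathcal L(H)}\le Me^{\omega T}$ on $[0,T]$ (enlarging $\omega\ge0$ if needed), together with the constants $C_{BR},C_{\overline Q_TS_T},C_{QS}$, I would first estimate $\delta r$. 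A backward Gronwall inequality applied to
\[
|\delta r(t)|_H\le Me^{\omega T}(C_{\overline Q_TS_T}+C_{QS}T)\sup_{[0,T]}|\delta z|_H+Me^{\omega T}\beta C_{BR}\int_t^T|\delta r(s)|_H\,ds
\]
yields $\sup_{[0,T]}|\delta r|_H\le Me^{\omega T}(C_{\overline Q_TS_T}+C_{QS}T)e^{Me^{\omega T}\beta C_{BR}T}\sup_{[0,T]}|\delta z|_H$.

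Feeding this bound into the difference of the forward equations and applying forward Gronwall to
\[
|\delta z(t)|_H\le Me^{\omega T}\beta C_{BR}\int_0^t|\delta z(s)|_H\,ds+Me^{\omega T}C_{BR}\int_0^t|\delta r(s)|_H\,ds
\]
gives $\sup_{[0,T]}|\delta z|_H\le Me^{\omega T}C_{BR}T\,e^{Me^{\omega T}\beta C_{BR}T}\sup_{[0,T]}|\delta r|_H$. Multiplying the two estimates, the constant collapses precisely to $C_T=M^2(C_{\overline Q_TS_T}+C_{QS}T)\,TC_{BR}\,e^{2Me^{\omega T}C_{BR}\beta T+2\omega T}$ of \eqref{Tsmall}, so that $\sup_{[0,T]}|\delta z|_H\le C_T\sup_{[0,T]}|\delta z|_H$. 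Since $C_T<1$ by hypothesis, $\delta z\equiv0$, and then the $\delta r$ estimate forces $\delta r\equiv0$; uniqueness of $(z,r)$, and hence of the whole $4$-uple, follows.

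The only genuinely delicate point is the bookkeeping of the two Gronwall constants, which must be tracked so that their product lands exactly on \eqref{Tsmall}; this is what dictates the symmetric use of $Me^{\omega T}$ for the semigroup norm and the linear $T$-factors coming from the two integral terms. Notably, no compactness or weak-convergence machinery (in contrast with the existence proof of Theorem \ref{Th:existence}) is required here: the boundedness of the operators $BR^{-1}B^{*}$, $\overline QS$, $\overline Q_TS_T$ and of $P$ reduces the whole statement to a routine contraction once the reduction to the pair $(z,r)$ is in place.
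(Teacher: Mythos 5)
Your proposal is correct and follows essentially the same route as the paper: the paper proves \eqref{Tsmall} by showing the composite map $\Phi\circ\Psi$ (backward solve after forward solve) is a contraction on $C([0,T];H)$ via exactly the two Gronwall estimates you derive, with the same constants multiplying to $C_T$, and its footnote likewise records that the argument yields existence as well. Your only deviations are cosmetic improvements: you make explicit the reduction to the $(z,r)$ system (uniqueness of $P$ from Proposition \ref{lem:boundriccati} and $s$ determined by \eqref{eq:s}), and you estimate the difference of two given solutions directly rather than invoking Banach--Caccioppoli, which is the same computation in different packaging.
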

\begin{proof}
We consider the space $C([0,T];H)$ endowed with the usual sup-norm
$$|f|_{\infty}:=\sup_{[0,T]}|f(t)|_{H}.$$

 Consider the map
$$\Psi: C([0,T];H) \to C([0,T];H)$$
where  $\Psi(r)$ is the unique mild solution to \eqref{eq:z} --- that is, \eqref{eq:mildz} holds.
 Then,
 consider the map
 $$\Phi: C([0,T];H) \to C([0,T];H)$$
where $\Phi(z)$ is the unique mild solution to \eqref{eq:r} --- that is,  \eqref{eq:mildr} holds.

 By construction, $\hat \zeta \in C([0,T];H)$ is a fixed point of $\Phi\circ \Psi$ if and only if $(\hat \zeta, \Psi(\hat \zeta))$ is a solution to the coupled system \eqref{eq:mildz}-\eqref{eq:mildr}.
We are going to prove that
\begin{equation}\label{eqn:contraction}
|(\Phi\circ \Psi)(r_1)(\cdot)-(\Phi\circ \Psi)(r_2)(\cdot)|_\infty\leq C_{T} |r_1(\cdot)-r_2(\cdot)|_\infty,\quad \forall r_1, r_2 \in C([0,T]; H).
\end{equation}
Due  to \eqref{Tsmall}, this guarantees the (existence and) uniqueness of a fixed point by the Banach-Caccioppoli fixed point theorem.
We proceed with some estimates.
\begin{enumerate}
\item[\emph{Step 1.}] We prove a Lipschitz estimate for $\Psi$. We write
\begin{multline*}
|\Psi(r_1)(t)-\Psi(r_2)(t)|_H=|z_1(t)-z_2(t)|_H\\
\leq\left|\int_0^te^{(t-s)A}BR^{-1}B^*P(s)(z_1(s)-z_2(s))ds \right|_H+\left|\int_0^te^{(t-s)A}BR^{-1}B^*(r_1(s)-r_2(s))ds \right|_H.
    \end{multline*}
Now, recalling \eqref{supP} and setting
\begin{equation}\label{beta}
\beta:=M^2e^{2\omega T}(\|Q_T+\overline Q_T\|_{\mathcal{L}(H)}+T\|Q+\overline Q\|_{\mathcal{L}(H)}),
\end{equation}
we have
$$
\left|\int_0^te^{(t-s)A}BR^{-1}B^*P(s)(z_1(s)-z_2(s))ds\right|_H\leq Me^{\omega T}C_{BR}\beta\int_0^t|z_1(s)-z_2(s)|_Hds.
$$
Moreover,
$$
\left|\int_0^te^{(t-s)A}BR^{-1}B^*(r_1(s)-r_2(s)ds \right|_H\leq Me^{\omega T}TC_{BR}\,|r_1(\cdot)-r_2(\cdot)|_\infty.
$$
Then, combining the inequalities above and using  Gronwall's Lemma, we have
$$
|\Psi(r_1)(\cdot)-\Psi(r_2)(\cdot)|_\infty=|z_1(\cdot)-z_2(\cdot)|_\infty\leq MTC_{BR}\, e^{Me^{\omega T}C_{BR}\beta T+\omega T}\,\, |r_1(\cdot)-r_2(\cdot)|_\infty.
$$

\item[\emph{Step 2.}] We prove a Lipschitz estimate for $\Phi$. For $i=1,2$, set $y_i(t):=r_i(T-t)$. Then,
$$
\begin{cases}
    y_i'(t)=(A^*-P(T-t)BR^{-1}B^*)y_i(t)-QSz_i(T-t),\\
    y_i(0)=-Q_TS_Tz_i(T);
\end{cases}
$$
that is, by the mild formulation
\begin{align*}
y_i(t)=& \ -e^{tA^*}Q_TS_Tz_i(T)-\int_0^te^{(t-s)A^*}P(T-s)BR^{-1}B^*y_i(s)ds\\&-\int_0^te^{(t-s)A^*}QS z_i(T-s)ds.
\end{align*}
Then
\begin{align*}
&|y_1(t)-y_2(t)|_H\\&\leq \  Me^{\omega T}C_{\overline{Q}_TS_T} |z_1(\cdot)-z_2(\cdot)|_\infty+\left|\int_0^te^{(t-s)A^*}P(T-s)BR^{-1}B^*(y_1(s)-y_2(s))ds\right|_H\\&\ \ \ \ +\left|\int_0^te^{(t-s))A^*}QS(z_1(T-s)-z_2(T-s))ds\right|_H.
\end{align*}
Note that by \eqref{supP} and \eqref{beta} we have
$$
\left|\int_0^te^{(t-s)A^*}P(T-s)BR^{-1}B^*(y_1(s)-y_2(s))ds\right|_H\leq Me^{\omega T}\beta C_{BR}\int_0^t|y_1(s)-y_2(s)|_Hds
$$
and
$$
\left|\int_0^te^{(t-s))A^*}QS(z_1(T-s)-z_2(T-s))ds\right|_H\leq Me^{\omega T}C_{QS}T |z_1(\cdot)-z_2(\cdot)|_\infty.
$$
Therefore, by  Gronwall's Lemma,
\begin{align*}
|\Phi(z_1)(\cdot)-\Phi(z_2)(\cdot)|_\infty=&\ |r_1(\cdot)-r_2(\cdot)|_\infty\\ \leq &M(C_{\overline{Q}_TS_T}+C_{QS}T) \,\,e^{Me^{\omega T}\beta C_{BR}T+\omega T}\,\, |z_1(\cdot)-z_2(\cdot)|_\infty.
\end{align*}
\item[\emph{Step 3.}] We may combine the previous estimates and   get
\begin{align*}
&|(\Phi \circ \Psi)(r_1)(\cdot)-(\Phi \circ \Psi)(r_2)(\cdot)|_\infty\\&\leq M(C_{\overline{Q}_TS}+C_{QS}T)|\Psi(r_1)(\cdot)-\Psi(r_2)(\cdot)|_\infty \,\,e^{Me^{\omega T}\beta C_{BR}T+\omega T}\\ &\leq  M^2(C_{\overline{Q}_TS}+C_{QS}T)TC_{BR} \,e^{2Me^{\omega T}C_{BR}\beta T+2\omega T}\,\, |r_1(\cdot)-r_2(\cdot)|_\infty ,
\end{align*}
completing the proof.
\end{enumerate}
\end{proof}
Under some further assumptions, in particular  requiring the dissipativity of the operators $\overline Q S$ and  $\overline Q_T S_T$, we are able to prove that uniqueness holds true for large  time horizon. Notice that the assumption of dissipativity of the latter operators also ensures the validity of Assumption \ref{ass:etan} (see Proposition \ref{prop:positive}) and, in turn, the existence of a solution for large time horizon.
\begin{theorem}\label{Th:uniqueness}(Uniqueness for large time horizon)
Assume that:
\begin{enumerate}[(a)]
\item $  -\overline Q_T S_T, -\overline Q S \in \Sigma^{+}(H);$
\item The following implications hold true:
$$\langle \overline Q_TS_Tx,x\rangle=0  \ \Rightarrow \ \overline Q_TS_Tx=0, \ \ \ \ \  \langle \overline QSx,x\rangle=0  \ \Rightarrow \ \overline QSx=0.$$
\end{enumerate}
 Then, the LQM mild solution to MFG is unique.
\end{theorem}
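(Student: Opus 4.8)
The plan is to reduce uniqueness of the whole LQM mild solution to uniqueness of the forward–backward pair $(z,r)$, and then to run the classical Lasry–Lions monotonicity argument adapted to the mild-solution framework. Since $P$ is the \emph{unique} mild solution of the uncoupled Riccati equation \eqref{Riccati:P} (Proposition \ref{lem:boundriccati}), every LQM mild solution shares this same $P$, and $s$ is explicitly recovered from $(P,z,r)$ through \eqref{eq:s}; hence it suffices to prove that the mild system \eqref{eq:mildz}--\eqref{eq:mildr} has a unique solution. I would take two solutions $(z_1,r_1),(z_2,r_2)$ and set $\zeta:=z_1-z_2$, $\rho:=r_1-r_2$. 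By linearity of the mild formulas these solve the \emph{homogeneous} system, with $\zeta(0)=0$ and terminal coupling $\rho(T)=-\overline Q_T S_T\,\zeta(T)$.

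The heart of the matter is the fundamental identity
\[
\frac{d}{dt}\langle \zeta(t),\rho(t)\rangle_H=\langle \overline Q S\,\zeta(t),\zeta(t)\rangle_H-\langle BR^{-1}B^*\rho(t),\rho(t)\rangle_H .
\]
Formally this follows by differentiating $\langle\zeta,\rho\rangle_H$ and noting that the unbounded contribution $\langle A\zeta,\rho\rangle_H-\langle\zeta,A^*\rho\rangle_H$ cancels, as do the cross terms $\pm\langle PBR^{-1}B^*\zeta,\rho\rangle_H$ (here one uses that $P$ and $BR^{-1}B^*$ are self-adjoint). Integrating over $[0,T]$ and inserting $\zeta(0)=0$, $\rho(T)=-\overline Q_TS_T\zeta(T)$ gives the balance
\[
\langle -\overline Q_T S_T\,\zeta(T),\zeta(T)\rangle_H+\int_0^T\!\langle -\overline Q S\,\zeta(s),\zeta(s)\rangle_H\,ds+\int_0^T\!\langle BR^{-1}B^*\rho(s),\rho(s)\rangle_H\,ds=0 .
\]
\textbf{The main obstacle} is exactly making the displayed identity rigorous: mild solutions are in general neither differentiable nor valued in $D(A)$, $D(A^*)$, so the pointwise computation is not licit. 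I would overcome this with the same device used in the existence proof, regularizing through the resolvents $J_n:=nR(n,A)$ (so $A_n=AJ_n$): the functions $\zeta_n:=J_n\zeta$, $\rho_n:=J_n^*\rho$ take values in the relevant domains and are smooth enough to differentiate. Establishing the identity for $(\zeta_n,\rho_n)$ and passing to the limit $n\to\infty$ — using \eqref{sexiest}, the uniform bounds obtained in the proof of Theorem \ref{Th:existence}, and the strong convergences \eqref{semiconv}, \eqref{A*conv} together with $J_n\to I$ — delivers the balance above for the mild solutions themselves.

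Once the balance is in hand, hypothesis (a) says that $-\overline Q_T S_T,-\overline Q S\in\Sigma^+(H)$ and that $BR^{-1}B^*\ge 0$, so all three summands are nonnegative; being a sum equal to zero, each vanishes. The vanishing of $\int_0^T\langle BR^{-1}B^*\rho,\rho\rangle_H\,ds=\int_0^T\langle R^{-1}B^*\rho,B^*\rho\rangle_U\,ds$ forces $B^*\rho\equiv 0$, since $R\ge\varepsilon I$ makes $R^{-1}$ positive definite; hence $BR^{-1}B^*\rho\equiv 0$. For the two remaining quadratic forms, continuity of the integrands upgrades ``a.e.'' to ``everywhere'', and hypothesis (b) then turns $\langle\overline Q_TS_T\zeta(T),\zeta(T)\rangle_H=0$ into $\overline Q_TS_T\zeta(T)=0$ (so $\rho(T)=0$) and $\langle\overline QS\zeta(s),\zeta(s)\rangle_H=0$ into $\overline QS\zeta(s)=0$ for all $s$.

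Finally I would feed these conclusions back into the mild equations. With $BR^{-1}B^*\rho\equiv 0$, the mild formula \eqref{eq:mildz} for $\zeta$ loses its $\rho$-coupling and becomes a homogeneous linear mild equation with $\zeta(0)=0$; Gronwall's lemma yields $\zeta\equiv 0$. Then, with $\overline QS\zeta\equiv 0$, the mild formula \eqref{eq:mildr} for $\rho$ loses its $\zeta$-coupling and becomes homogeneous with $\rho(T)=0$; Gronwall again gives $\rho\equiv 0$. Thus $z_1=z_2$ and $r_1=r_2$, and since $P$ and $s$ are thereby determined, the LQM mild solution is unique. I would also note that condition (b) is precisely what separates this from the positive-definite Lasry–Lions case: for a projection $-\overline QS=\Pi$ one has $\langle\Pi x,x\rangle_H=|\Pi x|_H^2$, so (b) holds, while if $-\overline QS$ is positive definite it holds automatically.
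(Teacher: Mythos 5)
Your proof is correct, and it follows a genuinely different route from the paper's on three counts, even though the fundamental identity at its core is the same as the paper's \eqref{man0}. First, where you integrate the identity over $[0,T]$ to obtain the classical Lasry--Lions balance with three nonnegative terms, the paper argues pointwise with $f(t)=\langle \hat z(t),\hat r(t)\rangle_H$: it shows $f(0)=0$, $f'\le 0$, and $f(T)\ge 0$ (cf.\ \eqref{pasta}--\eqref{man02}), whence $f\equiv 0$ and each term in \eqref{man0} vanishes separately; the two arguments are equivalent in substance. Second, your regularization device differs: you smooth the mild solutions themselves via $\zeta_n=J_n\zeta$, $\rho_n=J_n^*\rho$ with $J_n=nR(n,A)$, whereas the paper takes the \emph{strict} solutions $\hat z_n,\hat r_n$ of the Yosida-approximated equations with the limit functions $\hat r,\hat z$ frozen as inhomogeneities, proves uniform convergence $\hat z_n\to\hat z$, $\hat r_n\to\hat r$ by a Gronwall estimate (its Steps 2--3), and only then differentiates $\langle\hat z_n,\hat r_n\rangle_H$. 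Your device buys you the omission of that uniform-convergence step (since $J_n g\in D(A)$ with $AJ_n=A_n$ bounded, $J_n\zeta$ is indeed a strict solution of the equation with forcing $J_n g$), but one statement needs care: for finite $n$ the derivative of $\langle J_n\zeta,J_n^*\rho\rangle_H$ does \emph{not} satisfy the clean identity you display, because the cross terms $\langle J_nBR^{-1}B^*P\zeta,J_n^*\rho\rangle_H$ and $\langle J_n\zeta,J_n^*PBR^{-1}B^*\rho\rangle_H$ cancel only in the limit; you must carry these commutator-type error terms along and let them vanish as $n\to\infty$ using the strong convergence $J_n\to I$, $J_n^*\to I$ and the uniform bounds --- routine, but it should be stated, since ``establishing the identity for $(\zeta_n,\rho_n)$'' is not literally available. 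Third, your endgame is ordered differently and is in fact more economical: you first extract $B^*\rho\equiv 0$ from the coercivity $\langle R\alpha,\alpha\rangle_U\ge\eps|\alpha|_U^2$ (which yields $R^{-1}\ge\|R\|_{\mathcal{L}(U)}^{-1}I$), decouple the \emph{forward} equation and get $\zeta\equiv 0$ by Gronwall, after which $\rho(T)=-\overline Q_TS_T\zeta(T)=0$ and the backward equation is homogeneous, giving $\rho\equiv 0$; as a byproduct, your chain never actually uses hypothesis (b) --- your invocations of it are logically redundant --- whereas the paper's ordering genuinely invokes (b) to upgrade the vanishing quadratic forms to $\overline QS\hat z\equiv 0$ and $\overline Q_TS_T\hat z(T)=0$ (cf.\ \eqref{bas}--\eqref{bas2}) so as to trivialize the \emph{backward} equation first and only then the forward one. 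Your reduction at the outset (uniqueness of $P$ from Proposition \ref{lem:boundriccati} and recovery of $s$ from \eqref{eq:s}) matches the paper's opening remark, and your closing observation on why projections and positive-definite operators satisfy (b) reproduces Remark \ref{rem:uniqueness}.
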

\begin{remark}\label{rem:uniqueness}
%
Assumption (iii) \blu{on $-\overline{Q} S$ (resp., on $-\overline Q_T S_T$)} of the above Theorem \ref{Th:uniqueness} is satisfied if, for instance, one of the following conditions holds:
\begin{itemize}
    \item[(i)] \(-\overline{Q} S \in \Sigma^{++}(H)\) \blu{(resp., $-\overline Q_T S_T\in \Sigma^{++}(H)$)};
    \item[(ii)] \(-\overline{Q} S\) \blu{(resp., $-\overline Q_T S_T$)}  is a projection onto a closed subspace of \(H\).
\end{itemize}
\end{remark}
\begin{proof}  Clearly, it suffices to prove uniqueness of (mild) solutions to the coupled system \eqref{eq:z}-\eqref{eq:r}. For that,
let $(z_1,r_1)$ and $(z_2, r_2)$ be two solutions to the aforementioned system. Define
$$
\hat z:=z_1-z_2, \quad \hat r:=r_1-r_2.
$$
We are going to show that $(\hat z,\hat r)=(0, 0)$.
We split the proof in several steps. In the following,  $C$ will denote a positive constant, not depending on $n$, which may change from line to line.
\begin{itemize}
	\item[\emph{Step 1.}]
We notice that, given $\hat r$, we have that
 $\hat z$  is the unique mild solution to
\begin{equation}\label{eq:barz}
z'(t)=(A-BR^{-1}B^*P(t))z(t)-BR^{-1}B^*\hat r(t), \quad z(0)=0;
\end{equation}
similarly, given $\hat z$, we have that $\hat r$ is
 the unique  mild solution to
\begin{equation}\label{eqn:overliner}
r'(t)+(A^*-P(t)BR^{-1}B^*)r(t)-\overline Q S\hat z(t)=0, \quad r(T)=-\overline Q_TS_T\hat z(T).
\end{equation}
Clearly, by continuity of  $(z_i, r_{i})$, for $i=1,2$,
we have,
\begin{equation}\label{eqn:barrbounded}
|\hat r(t)|_H,  |\hat z(t)|_H \leq C \quad \forall t \in [0,T].
\end{equation}
The goal is now  to compute $\frac{d}{dt}\langle \hat z(t), \hat r(t)\rangle_{H}$. To achieve it, we pass through an approximation with the Yosida approximants $(A_{n})$ of the operator $A$ defined in \eqref{Yosida}.
\item[\emph{Step 2.}]

Let $\hat z_n$ be the unique strict solution to
\begin{equation}\label{eqn:znuniqueness}
z'(t)=(A_n-BR^{-1}B^*P(t))z(t)-BR^{-1}B^*\hat r(t), \quad z(0)=0,
\end{equation}
and let $\hat r_n$ be  the unique strict solution to
$$
r'(t)+(A_n^*-P(t)BR^{-1}B^*)r(t)-\overline Q S\hat z(t)=0, \quad r(T)=-\overline Q_TS_T\hat z(T).
$$
We are going to prove that $\hat z_n \rightarrow  \hat z$ and $\hat r_n \rightarrow \hat r$  uniformly on $[0,T]$ as $n \to \infty$. We prove that  for $\hat z_n$;  the same arguments can be applied to $\hat r_n$.

Since $\hat z_n$  is a strict solution to \eqref{eqn:znuniqueness}, it is also a mild solution to the same equation; that is, for all $t\in[0,T]$,
\begin{equation}\label{eq:mildznhat}
\hat z_{n}(t)=e^{t A_{n}}z_{0}-\int_{0}^{t} e^{(t-s)A_{n}} BR^{-1}B^*P(s)\hat z_{n}(s) ds- \int_0^{t}e^{(t-s)A_{n}}BR^{-1}B^{*}\hat r(s)ds.
\end{equation}
On the other hand, since  $\hat z$ is a mild solution to \eqref{eq:barz}, we also have
\begin{equation}\label{eq:mildbarz}
\hat z(t)=e^{t A}z_{0}-\int_{0}^{t} e^{(t-s)A} BR^{-1}B^*P(s)\hat z(s) ds- \int_0^{t}e^{(t-s)A}BR^{-1}B^{*}\hat r(s)ds.
\end{equation}
By \eqref{eq:mildznhat} and \eqref{eq:mildbarz}, we then get
\begin{align}\label{eq:diffbarznbarz}
\hat z_n(t)-\hat z(t) =&\ e^{t A_{n}}z_{0}-e^{t A}z_{0}-\int_{0}^{t} e^{(t-s)A_{n}} BR^{-1}B^*P(s)\hat z_{n}(s) ds\nonumber\\& +\int_{0}^{t} e^{(t-s)A} BR^{-1}B^*P(s)\hat z(s) ds+ \int_0^{t}\left(e^{(t-s)A}-e^{(t-s)A_{n}}\right)BR^{-1}B^{*}\hat r(s)ds.
\end{align}
Using the equality
\begin{eqnarray*}
&&-\int_{0}^{t} e^{(t-s)A_{n}} BR^{-1}B^*P(s)\hat z_{n}(s) ds+\int_{0}^{t} e^{(t-s)A} BR^{-1}B^*P(s)\hat z(s) ds\\ &=&\int_{0}^{t} e^{(t-s)A_{n}} BR^{-1}B^*P(s)(\hat z(s)-\hat z_{n}(s) )ds+\int_{0}^{t} \left(e^{(t-s)A}-e^{(t-s)A_n}\right) BR^{-1}B^*P(s)\hat z(s)ds
\end{eqnarray*}
into \eqref{eq:diffbarznbarz}, we get
\begin{align}\label{eq:diffbarznbarz2}\nonumber
|\hat z_n(t)-\hat z(t)|_H\leq & \left|e^{t A_{n}}z_{0}-e^{t A}z_{0}\right|_H+\int_{0}^{t}\left |\left(e^{(t-s)A}-e^{(t-s)A_{n}} \right)BR^{-1}B^*P(s)\hat z(s)\right |_Hds\\&+\int_{0}^{t} \left\|e^{(t-s)A_n} BR^{-1}B^*P(s)\right\|_H\left|\hat z(s)-\hat z_n(s) \right|_Hds\\&+ \int_0^{t}\left|\left(e^{(t-s)A}-e^{(t-s)A_{n}}\right)BR^{-1}B^{*}\hat r(s)\right|_Hds.\nonumber
\end{align}
Set
\begin{align*}
h_n(t):=& \ \left|e^{t A_{n}}z_{0}-e^{t A}z_{0}\right|_H+\int_{0}^{t} \left|\left(e^{(t-s)A}-e^{(t-s)A_{n}} \right)BR^{-1}B^*P(s)\hat z(s) \right|_Hds\\&+ \int_0^{t}\left|\left(e^{(t-s)A}-e^{(t-s)A_{n}}\right)BR^{-1}B^{*}\hat r(s)\right|_H\,ds,
\end{align*}
and
$$
g_n(t,s):=\left\|e^{(t-s)A_n} BR^{-1}B^*P(s)\right\|_{\mathcal{L}(H)}.
$$
Then, \eqref{eq:diffbarznbarz2} reads as
$$
|\hat z_n(t)-\hat z(t)| _H\leq h_n(t)+\int_{0}^{t} g_n(t,s)|\hat z(s)-\hat z_n(s) |_Hds.
$$
By Gronwall's Lemma, we therefore get
\begin{equation}\label{eqn:befgro}
|\hat z_n(t)-\hat z(t)|_H\leq h_n(t)+\int_0^th_n(s)g_n(t,s)e^{\int_s^tg_n(t,r)dr}ds.
\end{equation}
\item[\emph{Step 3.}]
We now want to take the limit as $n\to\infty$ in \eqref{eqn:befgro}.
First, by \eqref{semiconv}, we have
$$
|e^{tA_n}z_0- e^{tA}z_0|_H\rightarrow 0 \quad \mbox{ as } n \to \infty.
$$
Second,
noticing that, by \eqref{supP}, \eqref{sexiest}, and \eqref{eqn:barrbounded}, we have
$$
\left|\left(e^{(t-s)A}-e^{(t-s)A_{n}} \right)BR^{-1}B^*P(s)\hat z(s) \right|_H\leq C \quad \forall t \in [0,T],
$$
by \eqref{semiconv} and the dominated convergence Theorem we get
\begin{eqnarray*}
\int_{0}^{t} \left|\left(e^{(t-s)A}-e^{(t-s)A_{n}} \right)BR^{-1}B^*P(s)\hat z(s) \right|_Hds \rightarrow 0 \quad \mbox{ as } n \to \infty \ \  \forall t \in [0,T].
\end{eqnarray*}
Similarly,
$$
\int_{0}^{t} \left|\left(e^{(t-s)A}-e^{(t-s)A_{n}} \right)BR^{-1}B^*P(s)\hat r(s) \right|_Hds \rightarrow 0 \quad \mbox{ as } n \to \infty \ \  \forall t \in [0,T].
$$
%
%
Then, we have
$$
h_n(t)\to 0 \quad \mbox{ as } n \to \infty,\mbox{ for all } t \in [0,T].
$$
On the other hand,  by \eqref{supP} and \eqref{sexiest} we have
$$
g_n(t,s)\leq C \quad \forall \, 0\leq s\leq t\leq T.
$$
Then, by the dominated convergence theorem, we get
$$
|\hat z_n(t)-\hat z(t)|_H \rightarrow 0 \quad \mbox{ as } n \to \infty,  \ \forall t \in [0,T].
$$
\item[\emph{Step 4.}]
Consider now the sequence
$$
f_n: [0,T]\to \mathbb{R}, \ \ \ \ f_n(t):=\langle \hat z_n(t), \hat r_n(t)\rangle_{H} \quad \forall t \in [0,T].
$$
Then,
$$f_n(t)\rightarrow f(t):=\langle \hat z(t), \hat r(t)\rangle_{H}, \ \ \ \  \mbox{uniformly in  } t\in[0,T].$$
Moreover,
$$
f_n'(t)=-\langle BR^{-1}B^*\hat r(t),\hat r_n(t)\rangle_{H}+\langle \hat z_n(t), \overline QS\hat z(t)\rangle_{H} \quad\forall t \in [0,T].
$$
Setting
$$w(t):=-\langle BR^{-1}B^*\hat r(t),\hat r(t)\rangle_{H}+\langle \hat z(t), \overline QS\hat z(t)\rangle_{H}, \ \ \ \ \ t\in[0,T],$$
 we have
$$
f_n'(t)\rightarrow w(t) \mbox{ uniformly on } t\in [0,T].
$$
We may conclude that $f$ is differentiable and $f'=w$. This means that
\begin{equation}\label{man0}
\frac{d}{dt}\langle \hat z(t), \hat r(t)\rangle_{H}=-\langle R^{-1}B^*\hat r(t),B^{*}\hat r(t)\rangle_{H}+\langle \hat z(t), \overline QS\hat z(t)\rangle_{H} \quad \forall t \in [0,T].
\end{equation}
\blu{
\item[\emph{Step 5.}]Considering that $\hat z(0)=0$, and using Assumption \ref{assumptions}(vi)  and Assumption  (a) of the present theorem (on $\overline QS$), we get
\begin{equation}\label{pasta}
\langle \hat z(0), \hat r(0)\rangle_{H}=0, \quad \ \ \ \frac{d}{dt}\langle \hat z(t), \hat r(t)\rangle_{H}\leq 0.
\end{equation}
\blu{Moreover, recalling the terminal condition in \eqref{eqn:overliner} and using  again Assumption (a) of the present theorem (this time, on $\overline Q_{T}S_{T}$)}, we have
\begin{equation}\label{man02}
 \langle \hat z(T), \hat r(T)\rangle_{H}=\langle \hat z(T), -\overline Q_TS_T\hat z(T)\rangle_{H}\geq 0.
\end{equation}
Therefore,  combining \eqref{pasta} and \eqref{man02}, we obtain
\begin{equation}\label{zero}
\langle \hat z(t), \hat r(t)\rangle_{H}=0 \quad \forall t \in [0,T].
\end{equation}
From \eqref{pasta}-\eqref{man02}, it follows
$
\langle \hat z(T), -\overline Q_TS_T\hat z(T)\rangle_{H}=0.
$
Hence, by Assumption  (b) of the present theorem  (on $\overline Q_{T}S_{T}$), we get
\begin{equation}
\label{bas}
-\overline Q_TS_T \hat z(T)=0.
\end{equation}
Moreover,
using \eqref{zero} and using Assumption \ref{assumptions}(vi)   into \eqref{man0}, we get
$$
\langle \hat z(t), \overline QS\hat z(t)\rangle_{H}=0  \quad \forall t \in [0,T].
$$
So, again by Assumption (b) of the present theorem (this time, on $\overline QS$), we also get
\begin{equation}\label{bas2}
\overline QS\hat z(t)=0  \  \ \ \forall t\in[0,T].
\end{equation} Then, using \eqref{bas}-\eqref{bas2} into \eqref{eqn:overliner},  we get $\hat r(t)=0$ for all  $t\in[0,T]$. Finally, using the latter into  \eqref{eq:barz}, we also deduce $\hat z(t)=0$ for all $t\in[0,T]$, concluding the proof.}
\end{itemize}

\end{proof}

\section{Application: a Production Output Planning Problem with Delay}\label{sec:appl}
\subsection{The model}
We analyse a delayed version of a production output planning example introduced in \cite{CH}, Example A.

Consider $n$ firms $F_i$,  with $i=1,..., n$, supplying the same product to the market. Let us denote by $k_i$ be the production level of firm $F_i$ and suppose $k_i$ is subject to the following controlled stochastic dynamics:
\begin{equation}\label{eqn:firms}
\begin{cases}
dk_i(s)=\left[\alpha_i(s)+\int_{-d}^0b(\xi)\alpha_i(s+\xi)d\xi\right]ds+\sigma \,dW_i(s), \quad 0\leq s\leq T,\medskip\\
k_i(0)=k_i^0,  \ \ \quad \alpha_i(\xi)=\delta_{i}(\xi) \quad \forall \xi \in[-d,0].
\end{cases}
\end{equation}
Here $\alpha_i(s)$ is the control variable denoting the rate of investment/disinvestment in new capacity at time $s$. A fraction of this investment is immediately productive --- this is accounted by the term $\alpha_i(s)$ --- whereas another part takes time to become productive (so called time-to-build); overall, this second part is represented by the term $\int_{-d}^0b(\xi)\alpha_i(s+\xi)d\xi$, where $b\in L^2([-d,0];\mathbb{R}_+)$ is a kernel, and we refer to \cite{BDFG} for a detailed foundation of this last modeling feature. From the mathematical point of view, this term makes the problem a control problem with delay in the control variable, as can be appreciated also by the need of specifying an ``initial past'' for the control variable  by setting $\alpha_i(\xi)=\delta_{i}(\xi)$  for all $\xi \in[-r,0]$.  Finally, we add an idiosyncratic noise to the model represented by the term  $\sigma dW_i$ in the dynamics,  being the $W_i$'s  independent Brownian motions.

Setting $\overline k(t):=
 \frac{1}{n}\sum_{i=1}^nk_i(t)
$,
we assume that the price of the product is determined according to a linear inverse demand function:
\begin{equation}\label{eqn:price}
p(t)= \eta- \gamma \overline k(t),
\end{equation}
where $ \eta,  \gamma >0$ are given parameters.\footnote{Note that the overall production level $Q(t)=\sum_{i=1}^n z_i(t)$ is scaled by a factor $\frac{1}{n}$ in the price function, since we model the situation in which an increasing number of firms distributed over different areas join together to serve an increasing number of consumers (see \cite{CH}). The model \eqref{eqn:price} is a simplified form of a more general price model for many agents producing same goods proposed in \cite{Lamb}.}
The firm $F_i$ adjusts the production level $x_i$ looking at the current price of the product, considering that increasing price calls for more supplies of the product to consumers and viceversa.
The aim of the firm is to find a production level which is close to the price that the current market provides, i.e. $k_i(t) \approx \beta p(t)$, where
$\beta>0$ is a constant. Precisely,  the finite horizon cost of firm $F_i$ is
\begin{align*}
J_i\left(k_i^{0},\alpha_i;\overline k(\cdot)\right)&=\frac{1}{2} \,\,\mathbb{E}\left[\int_0^T \left[\left(k_i(t)-\beta p(t)\right)^2+r\alpha_i(t)^2\right]dt+ \left(k_i(T)-\beta p(T)\right)^{2}\right],
\\
&=\frac{1}{2}\,\,
\mathbb{E}\left[\int_0^T \left[\left(k_i(t)-\beta  \eta+\beta  \gamma \overline k(t)\right)^2+r\alpha_i(t)^2\right]dt+\left(k_i(T)-\beta  \eta+\beta  \gamma \overline k(T)\right)^{2}\right],
\end{align*}
where $r>0$.
We aim at studying the case in which there is a big number of firms and solve the problem by investigating the associated mean field game system. As usual in mean field game, one considers the formal limit of the Nash equilibrium as $n\to\infty$ and considers the optimization problem for the representative agent in the mean field.
Denoting by $\nu(t)$ the distribution of the population at time $t$ taken as given, the optimization problem of the representative agent is to minimize
$$
J\left(k^{0},\alpha;\nu\right)=\frac{1}{2}\,\,\mathbb{E}\left[\int_0^T \left[\left(k(t)-\beta  \eta+\beta  \gamma \int_{\R} \xi \nu(\textcolor{blu}{t,}d\xi)\right)^{2}+r\alpha(t)^{2}\right]dt+\left(k(T)-\beta  \eta+\beta  \gamma \int_{\R} \xi \nu(\textcolor{blu}{T,}d\xi)\right)^{2}\right],
$$
where
\begin{equation}\label{eqn:firmstypical}
\begin{cases}
dk(s)=\left[\alpha(s)+\int_{-d}^0b(\xi)\alpha(s+\xi)d\xi\right]ds+\sigma dW(s), \quad 0\leq s\leq T,\medskip\\
k(0)=k^0,  \quad \alpha(\xi)=\delta(\xi) \quad \forall \xi \in[-d,0],
\end{cases}
\end{equation}
where $W$ is a reference Brownian motion defined on a filtered probability space $(\Omega, \mathcal{F}, \mathbb{F}, \mathbb{P})$, satisfying the usual conditions and $\alpha$ belongs to $\mathcal{U}:=L^2_{\mathbb{F}}([0,T];\mathbb{R})$, the space of square integrable processes adapted to $\mathbb{F}$.
%
For future references, note that
$$
 \left(k-\beta  \eta+\beta  \gamma \int_{\R} \xi \nu(d\xi)\right)^{2}+r\alpha^2=g_1\left(k,\alpha, \nu\right)+g_2\left(k,\nu\right),\\
 $$
where
$$
g_1\left(k,\alpha,\nu\right)=\left(k+\beta \gamma\int_{\R}\xi \nu(d\xi)\right)^2+r\alpha^2,
\quad g_2\left(k,\nu\right)=(\beta \eta)^2-2\beta  \eta k-2 \eta\beta^2  \gamma \int_{\R}\xi \nu(d\xi).
$$

\subsection{Reformulation in Hilbert space}\label{subsec:ref}
We preceed to reformulate the problem in a suitable Hilbert space. We follow \cite{GM}, where the same reformulation has been carried out for a more general setting.
Let us consider the space
$
H=\mathbb{R}\times L^2_{d},$
where $L^2_{-d}:=L^2([-d,0]; \mathbb{R})$. We denote the generic element of $H$ by $x=(x_0,x_1(\cdot))$, where $x_0$ and $x_1(\cdot)$ denote, respectively, the $\mathbb{R}-$valued and the $L^2_{-d}-$ valued components. $H$ is a Hilbert space when
endowed with  inner product and norm
$$
\langle x,y\rangle_{H}=x_0y_0+\int_{-d}^0x_1(\xi)y_1(\xi)d\xi, \ \ \ \
|x|^{2}_{H}=|x_0|^2+\int_{-d}^0|x_1(\xi)|^2d\xi.
$$
Consider the linear closed unbounded operator
$$
A\, : \, D(A)\subset H \to H, \ \ \ \  (x_0, x_1(\xi))\mapsto \left(x_1(0), -\frac{dx_1(\xi)}{d\xi}\right),
$$
with domain
$$
D(A)=\left\{(x_0,x_1(\cdot)) \in H\, : \ \ x_1(\cdot) \in W^{1,2}([-d,0]; \mathbb{R}), \ x_1(-d)=0\right\}.
$$
The operator $A$ is the adjoint of the linear closed unbounded operator
$$
A^*\, : \, D(A^*) \subset H \textcolor{blu}{\rightarrow} H, \ \ \ \   \, (x_0, x_1(\cdot))\mapsto \left(0,\frac{dx_1(\xi)}{d\xi}\right),
$$
with domain
$$
D(A^*)=\left\{(x_0, x_1(\cdot) \in \mathbb{R}\times W^{1,2}([-d,0];\mathbb{R})\,: \, x_0=x_1(0)\right\}.
$$
It is well known that $A^*$ is the infinitesimal generator of a strongly continuous semigroup (see, e.g., Chojnowska-Michalik \cite{CM} or Da Prato and Zabczyk \cite{DaPraZa}), so it is  $A$.
We define now the bounded linear control operator
$$
B \, : \, \R \textcolor{blu}{\to}H, \ \ \ \  \, \alpha \mapsto  (1, b(\cdot))\alpha.
$$
In \cite[Prop.\,2]{GM}, it is proved that \eqref{eqn:firmstypical} is equivalent to the following abstract SDE in the  Hilbert space $H$
\begin{equation}\label{eqab}
\begin{cases}
dX(t)=(AX(s)+B\alpha(s))ds + G dW(s),\\
X(0)=x,
\end{cases}
\end{equation}
where $x$ is a suitable transformation of the initial data $(k^{0},\delta(\cdot))$ and
$$
G \, : \, \mathbb{R}\mapsto H, \ \ \ \  \, w\mapsto (\sigma w, 0).
$$
Given $m:[0,T]\to \mathcal{P}(H)$ measurable,
the objective functional
of the representative agent is then rewritten as
$$ J(x, \alpha;m)=\mathbb{E}\left[\int_0^T  \hat f(X(t),\alpha(t), m(t))dt +  \hat h(X(T),m(T))\right],
$$
 where $X(\cdot)$ evolves according to \eqref{eqab} and
$$
 \hat f(x,  \alpha, \mu)=\frac{1}{2}\left(g_1(x_0,\alpha,\mu^{0})+g_2(x_0,\mu^{0})\right),
\ \ \ \ \  \hat h(x, \mu)=\frac{1}{2}\left(g_1(x_0,0,\mu^{0})+g_2(x_0,\mu^{0})\right),
$$
where $\mu^{0}\in\mathcal{P}(\R)$ is the marginal of $m\in\mathcal{P}(H)$ on $\R$, that is
$$
\mu^{0} (A)=\mu(A\times L^{2}), \ \ \ A\in\mathcal{B}(\R).
$$
The terms involving $g_{1}$ falls into our setting with the following specifications
$$
U=\R \ \ \ \mbox{and} \ \ \  R=2r,
$$
$$
 S,S_T\in\mathcal{L}(H), \quad S x= S_Tx=-\beta  \gamma x,
$$
$$
Q,Q_T\in\mathcal{L}(H), \quad Q =Q_T =0,
$$
$$
\overline Q, \overline Q_T\in\mathcal{L}(H), \quad \overline Q x=\overline Q_Tx=(x_0,0).
$$
The term involving $g_{2}$, despite the constant $(\beta \eta)^2$  is the linear term of the form
$$
x_{0}+ \beta  \gamma \int_{\R}\xi \nu(d\xi)=\langle x,\hat n\rangle_{H}+\beta\gamma \int_{H} \langle x, \hat n\rangle_{H}\mu(dx),
$$
where $\hat n:=(1,0)\in H$.
This term can be inserted in our analysis as well, at the price of small changes. Indeed,
following the same computations as in Section \ref{sec:notset}, we find the same Riccati equation as \eqref{Riccati:P} and the same equation for $z(\cdot)$ as \eqref{eq:z}. The only differences are in the equation for $r(\cdot) $ which now reads as
\begin{equation}\label{eq:r2}
 \begin{cases}
\displaystyle{r'(t)+ (A^*-P(t)BR^{-1}B^*)r(t)-\overline Q Sz(t)-(\beta  \eta,0)=0},\\\\
r(T)=-\overline Q_TS_Tz(T)-(\beta  \eta,0),
\end{cases}
\end{equation}
and in the explicit expression for $s$ in terms of $P,z,r$.

Then, we have the following result.
\begin{theorem}
There exists a unique LQM mild solution to the MFG above.
\end{theorem}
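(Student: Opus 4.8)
The plan is to show that the Hilbert-space reformulation of Subsection \ref{subsec:ref} fits the standing Assumption \ref{assumptions} and meets the hypotheses of the uniqueness Theorem \ref{Th:uniqueness}, and then to argue that the extra affine term produced by $g_2$ destroys neither existence nor uniqueness. The key observation is that conditions (a)–(b) of Theorem \ref{Th:uniqueness} are exactly the nonnegativity hypotheses that, through Proposition \ref{prop:positive}, guarantee Assumption \ref{ass:etan} and hence existence via Theorem \ref{Th:existence}. Thus the whole statement reduces to a few algebraic checks on the concrete operators $A,B,Q,\overline Q,R,S$ specified at the end of Subsection \ref{subsec:ref}, plus the bookkeeping for the linear correction.

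First I would verify Assumption \ref{assumptions}. Item (i) holds since $A$ generates a $C_0$-semigroup (as recalled, citing \cite{CM,DaPraZa}); $B\alpha=(1,b(\cdot))\alpha$ is bounded because $b\in L^2([-d,0];\R)$, giving (ii); $Q=Q_T=0\in\Sigma^+(H)$ gives (iii); $\overline Q=\overline Q_T$, acting as $\overline Q x=(x_0,0)$, is the orthogonal projection onto the $\R$-component, hence self-adjoint, giving (iv), and since $Q+\overline Q=\overline Q\in\Sigma^+(H)$ (likewise at $T$) item (v) follows; $R=2r>0$ on $U=\R$ is coercive with $\varepsilon=2r$, giving (vi); finally $S=S_T=-\beta\gamma I$ and $G$ are bounded, giving (vii)–(viii).

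Next I would verify the two structural conditions of Theorem \ref{Th:uniqueness}. Since $S=-\beta\gamma I$ with $\beta,\gamma>0$, one computes $-\overline Q S=\beta\gamma\,\overline Q$, and identically $-\overline Q_T S_T=\beta\gamma\,\overline Q$. As $\overline Q$ is an orthogonal projection, $\beta\gamma\,\overline Q\in\Sigma^+(H)$, so condition (a) holds. For condition (b), $\langle -\overline Q S x,x\rangle_H=\beta\gamma\,x_0^2$, so that $\langle -\overline Q S x,x\rangle_H=0$ forces $x_0=0$, whence $-\overline Q S x=\beta\gamma(x_0,0)=0$; the argument for $-\overline Q_T S_T$ is identical. (Equivalently, $-\overline Q S$ is a positive multiple of a projection, which is the situation of Remark \ref{rem:uniqueness}(ii).) By Proposition \ref{prop:positive} the nonnegativity in (a) yields Assumption \ref{ass:etan}, hence an LQM mild solution exists by Theorem \ref{Th:existence}; and (a)–(b) give uniqueness by Theorem \ref{Th:uniqueness} for the purely quadratic system \eqref{eq:z}-\eqref{eq:r}.

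Finally I would account for the affine correction, the only discrepancy with the abstract model being the constant $(\beta\eta,0)$ entering the $r$-equation \eqref{eq:r2} and its terminal datum. For uniqueness this is immediate: if $(z_1,r_1),(z_2,r_2)$ solve the affine system, their difference $(\hat z,\hat r)$ solves exactly the homogeneous system \eqref{eq:z}-\eqref{eq:r} since the constants cancel, so Theorem \ref{Th:uniqueness} forces $\hat z=\hat r=0$. For existence I would rerun the Yosida-approximation scheme of Theorem \ref{Th:existence} with the affine decoupling ansatz $r_n(t)=\eta_n(t)z_n(t)+\lambda_n(t)$: equating coefficients, the $z_n$-terms reproduce exactly the Riccati equation for $\eta_n$ (so Assumption \ref{ass:etan} still applies), while the remaining terms give for $\lambda_n$ a \emph{linear}, non-Riccati ODE with terminal value $\lambda_n(T)=-(\beta\eta,0)$, forced by the bounded constant $(\beta\eta,0)$; its solvability and uniform-in-$n$ bounds follow at once from the bound \eqref{estunifeta} on $\eta_n$ and the semigroup bound \eqref{sexiest}. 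The a priori estimates of Step 2 and the weak-convergence passage of Steps 3–4 then carry over verbatim, modulo an additional bounded additive term. I expect this last point — certifying that the uniform estimates and the weak limit survive the added forcing — to be the only genuinely delicate step, but it is a routine adaptation of the already established argument.
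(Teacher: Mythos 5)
Your proposal is correct and follows essentially the same route as the paper, which simply verifies that $-\overline Q S=-\overline Q_T S_T=\beta\gamma\,\overline Q\in\Sigma^{+}(H)$ satisfies the hypotheses of Theorem \ref{Th:uniqueness} (via Remark \ref{rem:uniqueness}) and of Proposition \ref{prop:positive}, and then invokes Theorems \ref{Th:existence} and \ref{Th:uniqueness}. In fact you are more careful than the paper on the two points it glosses over: your direct check of condition (b) avoids the slight imprecision in citing Remark \ref{rem:uniqueness}(ii) (since $\beta\gamma\,\overline Q$ is a projection only when $\beta\gamma=1$), and your affine ansatz $r_n(t)=\eta_n(t)z_n(t)+\lambda_n(t)$, with $\lambda_n$ solving a linear backward ODE with terminal datum $-(\beta\eta,0)$, is exactly the ``small change'' the paper alludes to without spelling out.
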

\begin{proof}
One can prove existence exactly as in Theorem \ref{Th:existence}.  Uniqueness follows as in the proof of  Theorem \ref{Th:uniqueness}, taking into account also  Remark \ref{rem:uniqueness}; indeed,  by definition of $\overline Q, \overline Q_T,S, S_T$, assumptions  (ii)-(iii) of that theorem are satisfied.
\end{proof}

\appendix\section{}\label{sec:app}
\subsection{A useful lemma}
\begin{lemma}\label{lem:convT}
Let $\{x_n\}_n \subset H$ and $x \in H$ be such that $x_n \rightharpoonup x $  and let $\{F_n\}_n \subset \mathcal{L}(H)$ be such that $F_n^{*}\to F$ pointwise.
Then
$$
F_nx_n \rightharpoonup Fx.
$$
\end{lemma}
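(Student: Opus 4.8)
The plan is to reduce the asserted weak convergence to a family of scalar limits and then exploit the adjoints. Since a weak limit in the Hilbert space $H$ is determined by its inner products against every fixed test vector, it suffices to prove that $\langle F_n x_n, y\rangle_H \to \langle Fx, y\rangle_H$ for each $y \in H$. The key first move is to push the operator onto the test vector through its adjoint, writing $\langle F_n x_n, y\rangle_H = \langle x_n, F_n^* y\rangle_H$. This rewriting is what makes the problem tractable: the right-hand side is the pairing of the weakly convergent sequence $x_n \rightharpoonup x$ against the sequence $F_n^* y$, and the hypothesized pointwise convergence of the adjoints gives $F_n^* y \to F^* y$ strongly in $H$ for the fixed vector $y$.

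Next I would invoke the elementary but crucial fact that the inner product of a weakly convergent sequence with a strongly convergent one passes to the limit. Concretely, I would split
\begin{equation*}
\langle x_n, F_n^* y\rangle_H - \langle x, F^* y\rangle_H = \langle x_n - x,\, F^* y\rangle_H + \langle x_n,\, F_n^* y - F^* y\rangle_H .
\end{equation*}
The first summand tends to $0$ directly from $x_n \rightharpoonup x$, since $F^* y$ is a fixed element of $H$. For the second summand I would apply the Cauchy--Schwarz inequality to bound it by $\|x_n\|_H \, \|F_n^* y - F^* y\|_H$; the norms $\|x_n\|_H$ are uniformly bounded, because a weakly convergent sequence is norm-bounded by the uniform boundedness principle, while $\|F_n^* y - F^* y\|_H \to 0$ by the pointwise convergence of the adjoints. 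Hence the second summand also vanishes in the limit, and we obtain $\langle F_n x_n, y\rangle_H \to \langle x, F^* y\rangle_H = \langle Fx, y\rangle_H$, which is exactly $F_n x_n \rightharpoonup Fx$.

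The step that requires genuine care is the identification of the strong limit of $F_n^* y$: strong operator convergence of the $F_n$ alone does not transmit to their adjoints, so the convergence of the $F_n^*$ must be used as a hypothesis, and one cannot hope to pass to the limit directly inside the product $F_n x_n$ (the pairing of a merely weakly convergent factor with an operator converging only in the strong operator topology need not converge to the product of the limits without the compensation exhibited above). I would finally verify that the hypothesis is met in the intended application \eqref{eqzz}: there $F_n = e^{(T-t)A_n^*}$, so $F_n^* = e^{(T-t)A_n}$, and by the Yosida--semigroup convergence \eqref{semiconv} one has $F_n^* y = e^{(T-t)A_n} y \to e^{(T-t)A} y = F^* y$ for every $y \in H$; thus the lemma applies and delivers the weak convergence used in that step.
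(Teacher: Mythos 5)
Your proof is correct and is essentially the same as the paper's: the paper splits $\langle F_nx_n-Fx,y\rangle_H=\langle F_nx_n-Fx_n,y\rangle_H+\langle Fx_n-Fx,y\rangle_H$ and, after passing to adjoints, these are exactly your two summands $\langle x_n, F_n^*y-F^*y\rangle_H$ and $\langle x_n-x,F^*y\rangle_H$, handled with the same ingredients (weak convergence against the fixed vector $F^*y$, and Cauchy--Schwarz plus the uniform bound $\sup_n|x_n|_H<\infty$ from weak convergence). Your closing verification that the hypothesis holds in the application \eqref{eqzz} via \eqref{semiconv} is a sensible addition, though not part of the paper's proof of the lemma itself.
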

\begin{proof}
Let $y \in H$. We write
$$\langle F_nx_n- Fx, y\rangle_H =\langle F_nx_n-Fx_n, y\rangle_H +
\langle Fx_n-Fx, y\rangle_H
$$
Since $x_n \rightharpoonup x$, we have
$$
\langle Fx_n-Fx, y\rangle_H =\langle x_n-x, F^*y\rangle_H \rightarrow 0.
$$
On the other hand,
$$
|\langle F_nx_n-Fx_n, y\rangle_H|=|\langle x_n, (F_n^*-F^*)y\rangle_H|\leq |x_n|_H\,|(F_n^*-F^*)y|_H\leq  \left(\sup_{n} |x_n|_H\right)\, |(F_n^*-F^*)y|_H\rightarrow 0,
$$
where we used that
$$
\sup_{n} |x_n|_H<\infty,$$
since  $x_n \rightharpoonup x$. The claim follows.
\end{proof}

\subsection{On Assumption \ref{ass:etan}}
The propositions below are concerned with  the validity of Assumption \ref{ass:etan}.
In order to study the backward Riccati equation
\eqref{eqn:riccatieta}   we perform a time inversion and study the
following  forward Riccati equation

\begin{equation}\label{eqn:riccatifor}
\begin{cases}
 \eta_n'(t)=&(-P(T-t)BR^{-1}B^*+A_n^*)\eta_n(t)+\eta_n(t)(A_n-BR^{-1}B^*P(T-t)) \\&-\overline Q S-\eta_n(t)BR^{-1}B^*\eta_n(t),\\\\
 \eta_n(0)=&-\overline Q_T S_T,
 \end{cases}
\end{equation}
The notion of strict solutions to the previous equations is analogous to that of Assumption \ref{ass:etan}.
\begin{proposition}\label{prop:tsmall}
There exists $ \tau>0$  such that there exists a unique strict solution to \eqref{eqn:riccatifor}, in the sense of Assumption \ref{ass:etan}, in the interval $[0,\tau]$.
\end{proposition}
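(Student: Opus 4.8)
The plan is to fix $n$ and recast the forward Riccati equation \eqref{eqn:riccatifor} as an integral (mild) equation in which the bounded but $n$-dependent first-order terms $A_n^*\eta_n+\eta_n A_n$ are absorbed into a semigroup, so that the remaining vector field carries an $n$-independent Lipschitz constant; a Banach--Caccioppoli argument then produces a unique local solution on an interval $[0,\tau]$ whose length $\tau$ does \emph{not} depend on $n$. Write $W:=BR^{-1}B^*\in\Sigma^+(H)$ and, on the Banach space $\Sigma(H)$, introduce the uniformly continuous semigroup $\mathcal{T}_n(t)X:=e^{tA_n^*}\,X\,e^{tA_n}$, which maps $\Sigma(H)$ into itself (since $(e^{tA_n^*}Xe^{tA_n})^*=e^{tA_n^*}X^*e^{tA_n}$) and whose generator is the bounded operator $X\mapsto A_n^*X+XA_n$. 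By \eqref{sexiest} it obeys the $n$-uniform bound $\|\mathcal{T}_n(t)X\|_{\mathcal{L}(H)}\le M_T^2\|X\|_{\mathcal{L}(H)}$ for all $t\in[0,T]$. Collecting the zeroth-order terms into $F(t,X):=-P(T-t)WX-XWP(T-t)-\overline Q S-XWX$, equation \eqref{eqn:riccatifor} is equivalent to the fixed-point problem $\eta_n=\Gamma_n\eta_n$, where
\[
\Gamma_n\eta(t):=\mathcal{T}_n(t)(-\overline Q_T S_T)+\int_0^t\mathcal{T}_n(t-s)\,F(s,\eta(s))\,ds .
\]
I would work in the space of strongly continuous maps $[0,\tau]\to\Sigma(H)$ with norm $\sup_t\|\cdot\|_{\mathcal{L}(H)}$, interpreting the integral pointwise (as a Bochner integral in $H$ of the strongly continuous integrand $s\mapsto\mathcal{T}_n(t-s)F(s,\eta(s))x$), which is what accommodates the mere strong continuity of $P$.

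Next I would run the contraction with constants independent of $n$. All data norms $\|W\|_{\mathcal{L}(H)}$, $\|\overline Q S\|_{\mathcal{L}(H)}$, $\|\overline Q_T S_T\|_{\mathcal{L}(H)}$ are $n$-independent, and by \eqref{supP} the quantity $b_P:=\sup_{s\in[0,T]}\|P(s)\|_{\mathcal{L}(H)}$ is finite and $n$-independent. Fix $R:=2M_T^2\|\overline Q_T S_T\|_{\mathcal{L}(H)}+1$ and consider the closed ball $\mathcal{B}_R:=\{\eta:\sup_{[0,\tau]}\|\eta(t)\|_{\mathcal{L}(H)}\le R\}$. On $\mathcal{B}_R$ the map $X\mapsto F(t,X)$ is bounded by $C_1:=2b_P\|W\|R+\|\overline Q S\|+\|W\|R^2$ and Lipschitz with constant $C_2:=2\|W\|(b_P+R)$, both independent of $n$. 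Hence $\sup_{[0,\tau]}\|\Gamma_n\eta(t)\|_{\mathcal{L}(H)}\le M_T^2\|\overline Q_T S_T\|+M_T^2C_1\tau$ and $\sup_{[0,\tau]}\|\Gamma_n\eta_1(t)-\Gamma_n\eta_2(t)\|_{\mathcal{L}(H)}\le M_T^2C_2\,\tau\,\sup_{[0,\tau]}\|\eta_1(t)-\eta_2(t)\|_{\mathcal{L}(H)}$. Choosing $\tau$ so small (depending only on $M_T,C_1,C_2,R$, hence not on $n$) that $M_T^2C_1\tau\le R/2$ and $M_T^2C_2\tau<1$, the map $\Gamma_n$ is a contraction of $\mathcal{B}_R$ into itself. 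The Banach--Caccioppoli theorem then yields, for every $n$, a unique fixed point $\eta_n\in\mathcal{B}_R$, i.e.\ a unique mild solution of \eqref{eqn:riccatifor} on $[0,\tau]$ together with the $n$-uniform bound $\sup_{[0,\tau]}\|\eta_n(t)\|_{\mathcal{L}(H)}\le R$, which is precisely the estimate \eqref{estunifeta} restricted to $[0,\tau]$. (Here $F$ and $-\overline Q_T S_T$ preserve $\Sigma(H)$ once $\overline Q S,\overline Q_T S_T\in\Sigma(H)$, as one checks by taking adjoints term by term; in the general case the identical contraction runs in $C([0,\tau];\mathcal{L}(H))$.)

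Finally I would upgrade the mild solution to a strict one. Because $A_n\in\mathcal{L}(H)$, the generator $X\mapsto A_n^*X+XA_n$ of $\mathcal{T}_n$ is bounded and $t\mapsto\mathcal{T}_n(t)$ is differentiable; since $s\mapsto F(s,\eta_n(s))x$ is continuous in $H$ for each $x$, the map $t\mapsto\Gamma_n\eta_n(t)x$ is differentiable, and differentiating the integral equation returns exactly the pointwise identity required by Assumption \ref{ass:etan}(iii). Thus $\eta_n\in C^1_s([0,\tau];\Sigma(H))$ is a strict solution, and reversing the time inversion transfers the conclusion to \eqref{eqn:riccatieta}. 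The main obstacle is obtaining $\tau$ \emph{uniform in} $n$: a naive Picard scheme treating $A_n^*\eta+\eta A_n$ as part of the vector field produces a Lipschitz constant of order $\|A_n\|_{\mathcal{L}(H)}$, which diverges as $n\to\infty$ because $A_n\to A$ and $A$ is unbounded, so the existence time would collapse to $0$; absorbing those terms into $\mathcal{T}_n$, whose norm is controlled $n$-uniformly by \eqref{sexiest}, is exactly what secures a common $\tau$ and, at once, the uniform bound of (H2). The only other technical point, the strong (rather than norm) continuity of $P$, is handled throughout by working in the topologies $C_s$ and $C^1_s$.
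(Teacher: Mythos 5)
Your proposal is correct and takes essentially the same approach as the paper: your fixed-point map $\Gamma_n$ built from the two-sided semigroup $\mathcal{T}_n(t)X=e^{tA_n^*}Xe^{tA_n}$ is exactly the paper's operator $\Gamma_n$, the $n$-uniform contraction constants obtained from \eqref{sexiest} and \eqref{supP} mirror the paper's choices \eqref{eqn:r}--\eqref{eqn:B}, and the upgrade from mild to strict solution via the boundedness of $A_n$ is the same. The minor variations (your radius $R=2M_T^2\|\overline Q_TS_T\|_{\mathcal{L}(H)}+1$, your use of $P(T-t)$ where the paper writes $P(\tau-s)$, and your explicit caveat on $\Sigma(H)$-invariance) are immaterial.
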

\begin{remark}\label{rem:tsmall}
Note that  Proposition \ref{prop:tsmall} implies that, if $T>0$ is small enough --- precisely, smaller than the time $\tau$ of the same proposition --- then Assumption \ref{ass:etan} is satisfied.
\end{remark}
\begin{proof}
We will prove the existence of a solution in the ball
\begin{equation}\label{Brat}
B_{r,\tau}=\Big\{g \in C_s([0,\tau];\Sigma(H)): \, \|g(t)\|_{C_s([0,\tau]; \Sigma(H))}\leq r\Big\},
\end{equation}
for some $r,\tau>0$ to be fixed later and   not  depending on $n$.
In particular, once we have existence in $B_{r,\tau}$, since $r$ does not depend on $n$, it follows that (H2) holds.

Given $\tau>0$, consider the map
$$\Gamma_{n}:C_{s}([0,\tau] ; \Sigma(H))\to C_{s}([0,\tau] ; \Sigma(H)), \ \ \ f\mapsto \Gamma_{n} f,$$
defined, for $(t,x)\in[0,\tau]\times H$, by
\begin{align*}
&\Gamma_n(f)(t)x=-e^{tA_n^*}\overline Q_T S_Te^{tA_n}x-\int_0^te^{sA_n^*}\overline QSe^{sA_n}xds\\&-\int_0^t e^{(t-s)A_n^*}(P(\tau-s)BR^{-1}B^*f(s)+f(s)BR^{-1}B^*P(\tau-s)+f(s)BR^{-1}B^*f(s))e^{(t-s)A_n}x\,\,ds.
\end{align*}
A mild solution to   \eqref{eqn:riccatifor} is a fixed point of $\Gamma_{n}$.
Set
\begin{equation}\label{eqn:r}
r:=2M_T^2\|\overline Q_T S_T\|_{\mathcal{L}(H)}
\end{equation}
and choose $\tau>0$ such that the following two are true:
\begin{equation}\label{eqn:A}
M_T^2\Big\{\|\overline Q_T S_T\|_{\mathcal{L}(H)}+\tau \Big[\|\overline QS\|_{\mathcal{L}(H)}+2r\|P(\tau-t)\|_{C_s([0,\tau];\Sigma(H))}\|BR^{-1}B^*\|_{\mathcal{L}(H)}+ r^2\|BR^{-1}B^*\|_{\mathcal{L}(H)}\Big]\Big\}\leq r
\end{equation}
and
\begin{equation}\label{eqn:B}
\tau M_T^2\Big[2\|P(\tau-t)\|_{C_s([0,\tau];\Sigma(H))}\|BR^{-1}B^*\|_{\mathcal{L}(H)}+2r\|BR^{-1}B^*\|_{\mathcal{L}(H)}\Big]\leq \frac{1}{2}.
\end{equation}
Letting $f \in B_{r,\tau}$ and recalling \eqref{est:semi} and \eqref{eqn:A}, we have for all $t\in[0,\tau]$ and $x \in H$
\begin{multline*}
|\Gamma_n(f)(t)x|_H\\\leq M_T^2\Big\{\|\overline Q_T S_T\|_{\mathcal{L}(H)}+\tau \Big[ \|\overline QS\|_{\mathcal{L}(H)}+2r \|P(\tau-t)\|_{C_s([0,\tau];\Sigma(H))}\|BR^{-1}B^*\|_{\mathcal{L}(H)}+ r^2\|BR^{-1}B^*\|_{\mathcal{L}(H)}\Big]\Big\}|x|_H\leq r,
\end{multline*}
so that
$$
\Gamma_n(B_{r,\tau})\subseteq B_{r,\tau}.
$$
Moreover, for all $ t \in [0,\tau]$ and $x \in H$,
\begin{multline*}
\Gamma_n(f)(t)x-\Gamma_n(g)(t)x=\int_0^te^{(t-s)A^*_n}[P(\tau-s)BR^{-1}B^*(g(s)-f(s))+(g(s)-f(s))BR^{-1}B^*P(\tau-s)\\+f (s)BR^{-1}B^*(g(s)-f(s))+(g(s)-f(s))BR^{-1}B^*g(s)](s)e^{(t-s)A_n}xds
\end{multline*}
and then
\begin{eqnarray*}
& &\|\Gamma_n(f)-\Gamma_n(g)\|_{C_s([0,\tau]; \Sigma(H))}=\sup_{t \in [0,\tau]}\|\Gamma_n(f)(t)-\Gamma_n(g)(t)\|_{\mathcal{L}(H)}\\ &\leq&\tau M_T^2\Big[2\|P(\tau-t)\|_{C_s([0,\tau];\Sigma(H))}\|BR^{-1}B^*\|_{\mathcal{L}(H)}+2r\|BR^{-1}B^*\|_{\mathcal{L}(H)}\Big]\sup_{t \in [0,\tau]}\|f(t)-g(t)\|_{\mathcal{L}(H)} \\ &\leq& \frac{1}{2}\sup_{t \in [0,\tau]}\|f(t)-g(t)\|_{\mathcal{L}(H)}=:\frac{1}{2}\|f-g\|_{C_s[0,\tau];\Sigma(H))}
\end{eqnarray*}
where the last inequality follows from \eqref{eqn:B}.
Thus $\Gamma_n$ is a contraction in $B_{r,\tau}$ and by the Banach-Cacciopoli fixed point theorem, there exists a unique mild solution $f$ in $B_{r,\tau}$.
\medskip

Finally, since $A_{n},A_{n}^{*}\in\mathcal{L}(H)$, we clearly have that $f\in C^{1}_{s}([0,T];\Sigma(H))$ and that it is a strict solution to \eqref{eqn:riccatieta}, i.e.,  in the sense of Assumption \ref{ass:etan}.
\end{proof}

The next proposition deals with the possibility of prolonging the strict solutions from  local ones to  global ones. We need to set assumptions on $\overline Q S$, $\overline Q_T S_T$ to achieve the goal; \blu{notice that they correspond to parts of the assumptions required for the global uniqueness (cf. (a) in Theorem \ref{Th:uniqueness}).}
\begin{proposition}\label{prop:positive}
If
  $-\overline Q S, -\overline Q_T S_T\in \Sigma_{+}(H)$, then Assumption \ref{ass:etan} holds true.
  \end{proposition}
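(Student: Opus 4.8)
The plan is to work with the time-reversed \emph{forward} Riccati equation \eqref{eqn:riccatifor}, whose strict solution $\eta_n$ corresponds, via $t\mapsto T-t$, to the backward solution in Assumption \ref{ass:etan} with the same sup-norm; it thus suffices to produce, for each $n$, a global strict solution of \eqref{eqn:riccatifor} on $[0,T]$ together with a bound on $\sup_t\|\eta_n(t)\|_{\mathcal{L}(H)}$ that is uniform in $n$. I would set $D:=BR^{-1}B^*\in\Sigma^+(H)$, $C:=-\overline QS\in\Sigma^+(H)$ (by hypothesis), note $\eta_n(0)=-\overline Q_TS_T\in\Sigma^+(H)$ (again by hypothesis), and write $\tilde A_n(t):=A_n-BR^{-1}B^*P(T-t)$, a bounded strongly continuous family. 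Proposition \ref{prop:tsmall} already yields a unique local strict solution, and since the right-hand side of \eqref{eqn:riccatifor} is a quadratic (hence locally Lipschitz) vector field on the Banach space $\Sigma(H)$ with coefficients bounded uniformly in $n$ — legitimate because $A_n$ is bounded — standard ODE theory gives a unique maximal strict solution on some $[0,T_n)$, subject to the continuation (blow-up) alternative: if $T_n\le T$ then $\limsup_{t\uparrow T_n}\|\eta_n(t)\|_{\mathcal{L}(H)}=\infty$. The whole argument then reduces to ruling this out by an a priori bound, uniform in $n$.

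The key estimate is a two-sided comparison $0\le\eta_n(t)\le\Lambda_n(t)$ on $[0,T_n)$, where $\Lambda_n$ solves the linear Lyapunov equation obtained by dropping the quadratic term, namely $\Lambda_n'=\tilde A_n(t)^*\Lambda_n+\Lambda_n\tilde A_n(t)+C$ with $\Lambda_n(0)=-\overline Q_TS_T$. For the lower bound I would rewrite \eqref{eqn:riccatifor} in closed-loop form, $\eta_n'=(\tilde A_n-D\eta_n)^*\eta_n+\eta_n(\tilde A_n-D\eta_n)+C+\eta_nD\eta_n$, and apply variation of constants along the evolution family $V_n(t,s)$ generated by $\tilde A_n(t)-D\eta_n(t)$, obtaining
\[
\eta_n(t)=V_n(t,0)^*\eta_n(0)V_n(t,0)+\int_0^tV_n(t,s)^*\big[C+\eta_n(s)D\eta_n(s)\big]V_n(t,s)\,ds.
\]
Since $\eta_n(0),C\ge0$ and $\eta_nD\eta_n\ge0$ (as $D\ge0$ and $\eta_n$ is self-adjoint), every summand is nonnegative, so $\eta_n(t)\ge0$. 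For the upper bound, the difference $W_n:=\Lambda_n-\eta_n$ solves $W_n'=\tilde A_n^*W_n+W_n\tilde A_n+\eta_nD\eta_n$ with $W_n(0)=0$, whence $W_n(t)=\int_0^tU_n(t,s)^*[\eta_n(s)D\eta_n(s)]U_n(t,s)\,ds\ge0$, $U_n$ being the evolution family of $\tilde A_n$; thus $\eta_n(t)\le\Lambda_n(t)$.

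It remains to bound $\|\Lambda_n(t)\|_{\mathcal{L}(H)}$ uniformly in $n$. From $\|e^{tA_n}\|_{\mathcal{L}(H)}\le M_T$ (see \eqref{sexiest}) and the uniform bound $L:=\|BR^{-1}B^*\|_{\mathcal{L}(H)}\sup_{[0,T]}\|P(s)\|_{\mathcal{L}(H)}$ on the bounded perturbation, coming from \eqref{supP}, a Gronwall argument on the Duhamel expansion $U_n(t,s)=e^{(t-s)A_n}+\int_s^te^{(t-\tau)A_n}\big(-BR^{-1}B^*P(T-\tau)\big)U_n(\tau,s)\,d\tau$ gives $\|U_n(t,s)\|_{\mathcal{L}(H)}\le M_Te^{M_TLT}$ for all $0\le s\le t\le T$ and all $n$. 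Writing $\Lambda_n(t)=U_n(t,0)^*(-\overline Q_TS_T)U_n(t,0)+\int_0^tU_n(t,s)^*C\,U_n(t,s)\,ds$ then bounds $\|\Lambda_n(t)\|_{\mathcal{L}(H)}$ by a constant independent of $n$ and $t$. As $0\le\eta_n(t)\le\Lambda_n(t)$ forces $\|\eta_n(t)\|_{\mathcal{L}(H)}\le\|\Lambda_n(t)\|_{\mathcal{L}(H)}$, this uniform bound contradicts the blow-up alternative, so $T_n>T$ and $\eta_n$ is a global strict solution, giving (H1), while the same bound is precisely (H2). Reverting through $t\mapsto T-t$ establishes Assumption \ref{ass:etan}.

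I expect the main obstacle to be the a priori estimate, and specifically its uniformity in $n$: the comparison arguments are routine once the closed-loop and Lyapunov representations are in place, but every constant must be controlled independently of $n$, which is possible only because the Yosida semigroups obey the uniform bound \eqref{sexiest} and $P$ is uniformly bounded by \eqref{supP}. A secondary technical point is the continuation argument itself, i.e.\ confirming that the maximal-solution/blow-up dichotomy applies to the Banach-space Riccati ODE; this is legitimate here precisely because boundedness of $A_n$ turns the right-hand side into a genuine quadratic vector field on $\Sigma(H)$.
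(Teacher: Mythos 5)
Your proof is correct, and it follows the same overall architecture as the paper's: pass to the time-reversed forward equation \eqref{eqn:riccatifor}, get local existence from Proposition \ref{prop:tsmall}, establish the two-sided a priori bound $0\le \eta_n(t)\le \beta I$ with $\beta$ independent of $n$ (using $-\overline QS,-\overline Q_TS_T\in\Sigma^+(H)$, \eqref{sexiest} and \eqref{supP}), and then globalize. The differences are in the implementation of the two key steps, and they are worth noting. For the lower bound, the paper absorbs the quadratic term entirely into the closed loop by splitting it symmetrically, taking $L_n(t,f)=A_n-BR^{-1}B^*P(T-t)-\tfrac12 BR^{-1}B^*f(t)$, so that the representation formula contains only the nonnegative data; you instead use the full feedback $\tilde A_n-D\eta_n$ and keep the term $\eta_nD\eta_n\ge0$ under the integral --- both are valid. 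For the upper bound, the paper works directly on the quadratic form $\langle f(t)x,x\rangle$: it drops the nonpositive term coming from $R^{-1}\ge0$ in the mild formulation (this one-sided bound suffices because positivity has already been proved) and closes with Gronwall on $\|f(s)\|_{\mathcal{L}(H)}$; your comparison $\eta_n\le\Lambda_n$ with the linear Lyapunov solution, proved via the positivity representation of $W_n=\Lambda_n-\eta_n$, is a more structural route to a bound of the same type, at the cost of introducing the evolution family $U_n(t,s)$ and its uniform Duhamel estimate. Finally, for globalization the paper runs an explicit restart of the contraction with a step size $\tau_1$ that is uniform thanks to the a priori bound, reaching $T$ in finitely many steps, whereas you invoke the maximal-solution/blow-up alternative; these are equivalent in substance, and the paper's iteration is in effect a self-contained proof of that alternative in the present setting. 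One technical caveat on your phrasing: since $P$ is only in $C_s([0,T];\Sigma^+(H))$, the map $t\mapsto \tilde A_n(t)$ is strongly continuous but not, in general, norm-continuous, so the right-hand side of \eqref{eqn:riccatifor} is not literally a continuous-in-$t$ locally Lipschitz vector field on the Banach space $\Sigma(H)$; the continuation argument must be run in the $C_s$ framework with strict solutions in the strong sense of Assumption \ref{ass:etan} (pointwise in $x$), which is exactly what the uniform-step contraction of the paper provides, and your blow-up argument goes through verbatim once restated there.
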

\begin{proof}
\begin{enumerate}
\item[\emph{Step 1.}]
Here we show that,  for each solution $f$ of \eqref{eqn:riccatifor} in $[0,T_0]$ with $T_0\leq T$,  we have
\begin{equation}\label{eqn:aprioriest}
0\leq f(t)\leq \beta_TI \quad \forall  t \in [0,T_0],
\end{equation}
with
\begin{equation}\label{betat}
\beta_T:=M_T^2\big(\|\overline Q_TS_T\|_{\mathcal{L}(H)}+T\|\overline Q S\|_{\mathcal{L}(H)}\big)e^{2TM_T^2\sup_{s \in [0,T_0]}\|P(s)\|_{\mathcal{L}(H)}\|BR^{-1}B^*\|_{\mathcal{L}(H)}}.
\end{equation}
\begin{enumerate}[(i)]
\item Here we prove the lower bound of \eqref{eqn:aprioriest}, i.e., that
\begin{equation}\label{eq:poseta}
f(t)\geq 0 \quad \forall t \in [0,T_0].
\end{equation}
Note that $f$ is the solution of the following  in $[0,T_0]$:
$$
f'(t)=L_n(t,f(t))^*f(t)+f(t)L_n(t,f(t))-\overline Q S, \quad f(0)=-\overline Q_T S_T,
$$
where $$L_n(t,\varphi(t))=A_n-BR^{-1}B^*P(T-t)-\frac{1}{2}BR^{-1}B^*\varphi(t).$$ Denote by $U^{f}_n(t,s)$, where $0 \leq s \leq t \leq \tau$, the evolution operator associated with $L_n(t, f(t))$. Then
$$
f(t)=-U^{f}_n(t,0)\overline Q_TS_TU_n^{f}(t,0)^{*}-\int_0^tU^{f}_n(t,s)\overline QS U^{f}_n(t,s)^{*}ds,
$$
Since $-\overline Q S, -\overline Q_TS_T \in \Sigma^+(H)$, we get \eqref{eq:poseta}.
\medskip
\item
Here we prove the upper bound of \eqref{eqn:aprioriest}, i.e., that
\begin{equation}\label{lb}
f(t)\leq \beta_TI \quad \forall  t \in [0,T_0],
\end{equation}
where $\beta_{T}$ is given in \eqref{betat}.
For $t \in [0,T_0]$ and $x \in H$, we have
\begin{eqnarray*}
\langle f(t)x,x\rangle_H&=&-\langle \overline Q_T S_T e^{tA_n}x, e^{tA_n}x\rangle_H-\int_0^t\langle \overline Q Se^{sA_n}x, e^{sA_n}x\rangle_H\,ds\\&-&\int_0^t\langle P(T_0-s)BR^{-1}B^*)f(s)e^{(t-s)A_n}x,e^{(t-s)A_n}x\rangle_Hds\\ &-&\int_0^t\langle f(s)BR^{-1}B^*P(T_0-s)e^{(t-s)A_n}x, e^{(t-s)A_n}x\rangle_H\,ds\\ &-&\int_0^t\langle R^{-1}B^*f(s)e^{(t-s)A_n}x, B^*f(s)e^{(t-s)A_n}x\rangle_H\,ds.
\end{eqnarray*}

Since $R$ is nonnegative, for all $t \in [0,T_0]$ and $x \in H$, we have
$$
\int_0^t\langle R^{-1}B^*f(s)e^{(t-s)A_n}x, \,\,B^*f(s)e^{(t-s)A_n}x\rangle_Hds\geq 0.
$$
Therefore, for all $t \in [0,T_0]$ and $x \in H$ we have
\begin{eqnarray*}
\langle f(t)x,x\rangle_{H}&\leq&-\langle \overline Q_T S_T e^{tA_n}x, e^{tA_n}x\rangle_H-\int_0^t\langle \overline Q Se^{sA_n}x, e^{sA_n}x\rangle_H\,ds\\&-&\int_0^t\langle P(T_0-s)BR^{-1}B^*f(s)e^{(t-s)A_n}x,e^{(t-s)A_n}x\rangle_Hds\\ &-& \int_0^t\langle f(s)BR^{-1}B^*P(T_0-s)e^{(t-s)A_n}x, e^{(t-s)A_n}x\rangle_H\,ds,
\end{eqnarray*}
which implies
\begin{eqnarray*}
|\langle f(t)x,x\rangle_H|&\leq& \|\overline Q_TS_T \|_{\mathcal{L}(H)}M_T^2|x|_{H}^2+TM_T^2\|\overline Q S\|_{\mathcal{L}(H)}|x|_{H}^2\\&+&2M_T^2\sup_{t \in [0,T_0]}\|P(T_0-t)\|_{\mathcal{L}(H)}\|BR^{-1}B^*\|_{\mathcal{L}(H)}\int_0^t\|f(s)\|_{\mathcal{L}(H)}|x|_{H}^2ds.
\end{eqnarray*}
Then, by the characterization of the norm of a self-adjoint operator we have for all $t \in [0,T_0]$
\begin{multline*}
\|f(t)\|_{\mathcal{L}(H)}\leq M_T^2(\|\overline Q_TS_T \|_{\mathcal{L}(H)}+T\|\overline Q S\|_{\mathcal{L}(H)})\\+2M_T^2\sup_{t \in [0,T_0]}\|P(T_0-t)\|_{\mathcal{L}(H)}\|BR^{-1}B^*\|_{\mathcal{L}(H)}\int_0^t\|f(s)\|_{\mathcal{L}(H)}ds
\end{multline*}
and by the Gronwall's Lemma we have
$$
\|f(t)\|_{\mathcal{L}(H)}\leq M_T^2(\|\overline Q_TS_T\|_{\mathcal{L}(H)}+T\|\overline Q S\|_{\mathcal{L}(H)})e^{2TM_T^2\sup_{t \in [0,T_0]}\|P(T_0-t)\|_{\mathcal{L}(H)}\|BR^{-1}B^*\|_{\mathcal{L}(H)}}, \quad \forall t \in [0,T_0],
$$
from which we conclude \eqref{lb}.
\end{enumerate}

\item[\emph{Step 2.}]
Here we prove the existence of a strict solution to \eqref{eqn:riccatifor} in the whole interval $[0,T]$. First of all, by Proposition \ref{prop:tsmall}, we may construct a
 (unique) solution $f$ of \eqref{eqn:riccatieta} in $B_{r, \tau}$ given in \eqref{Brat} with $r$ and $\tau$ satisfying \eqref{eqn:r},  \eqref{eqn:A}, and \eqref{eqn:B}. We proceed by a second contraction on the ball
$$
B_{r_{1},\tau_{1}}=\Big\{g \in C_s([\tau,{\tau+\tau_{1}}];\Sigma(H)): \, \|g(t)\|_{C_s([\tau,\tau+\tau_{1}]; \Sigma(H))}\leq r_{1}\Big\},
$$
where $r_1$ and  $\tau_1$ have to be chosen appropriately. The initial datum at $t=\tau$ is $f(\tau)$ and we know that
$$
\|f(\tau)\|_{\mathcal{L}(H)}\leq \beta_T.
$$
Following the arguments in the proof of Proposition \ref{prop:tsmall}, we choose
$$
r_1:=2M_T^2\beta_{T}
$$

\begin{align*}
&\frac{r_1}{2}+\tau_1M_T^2\|\overline QS\|_{\mathcal{L}(H)}\\&+2r_1\tau_1M_T^2\|P(\tau-t)\|_{C_s([0,\tau+\tau_1];\Sigma(H))}\|BR^{-1}B^*\|_{\mathcal{L}(H)}+\tau_1r_1^2M_T^2\|BR^{-1}B^*\|_{\mathcal{L}(H)}\leq r_1.
\end{align*}
$$
\tau_1M_T^2\bigg[2\|P(\tau-t)\|_{C_s([0,\tau+\tau_1];\Sigma(H))}\|BR^{-1}B^*\|_{\mathcal{L}(H)}+2r_1\|BR^{-1}B^*\|_{\mathcal{L}(H)}\bigg]\leq \frac{1}{2}.
$$

By these choices we obtain a unique solution $f_1(t)$ on $[\tau, \tau+\tau_1]$ such that $f_1(\tau)=f(\tau)$. Then we stick the two solutions and we obtain a solution, that by some abuse of notation we call again $f$, on $[0,\tau+\tau_1]$. This solution satisfies the a priori estimate \eqref{eqn:aprioriest}. Hence
$$
\|f(t)\|_{\mathcal{L}(H)}\leq \beta_T \quad \forall t \in [0,\tau+\tau_1].
$$
This implies that we can iterate the contraction procedure on the ball with radius $r_1$ and interval $[\tau+\tau_1, \tau+2\tau_1]$ with the same choice of $r_1, \tau_1$. Then in a finite number $k$ of steps  we reach $T$ when $\tau+k\tau_1\geq T$. By \emph{Step 1} this solution satisfies (H2) and the proof is complete.
\end{enumerate}
\end{proof}

\section*{Acknowledgments}
\footnotesize{
Salvatore Federico, Daria Ghilli, Fausto Gozzi have been supported by the PRIN project ``The
Time-Space Evolution of Economic Activities: Mathematical Models and Empirical Applications".

Salvatore Federico and Daria Ghilli have been supported by the INdAM-GNAMPA project ``Modelli Matematici  per i Processi Decisionali riguardanti la Transizione Energetica".}

\end{document}